\numberwithin{equation}{section}
\newtheorem{theorem}{Theorem}[section]
\newtheorem{lemma}[theorem]{Lemma}
\newtheorem{prop}[theorem]{Proposition}
\theoremstyle{definition}
\newtheorem{remark}[theorem]{Remark}
\theoremstyle{definition}
\newtheorem{definition}[theorem]{Definition}
\theoremstyle{definition}
\def\dashint{\operatorname%
{\,\,\text{\bf-}\kern-.98em\DOTSI\intop\ilimits@\!\!}}
\def\\det{\text{det}}
\def\.5{\frac{1}{2}}
\newcommand{\RN}[1]{%
  \textup{\uppercase\expandafter{\romannumeral#1}}%
}
\renewcommand{\epsilon}{\varepsilon}
\newcounter{marnote}
\begin{document}
\title[Gradient estimates for the insulated conductivity problem]{Gradient estimates for the insulated conductivity problem with inclusions of the general $m$-convex shapes}

%\author[C.X. Miao]{Changxing Miao}
%\address[C.X. Miao]{Institute of Applied Physics and Computational Mathematics, P.O. Box 8009, Beijing, 100088, China.}
%\email{miao\_changxing@iapcm.ac.cn}

\author[Z.W. Zhao]{Zhiwen Zhao}

\address[Z.W. Zhao]{Beijing Computational Science Research Center, Beijing 100193, China.}
%\address{2. School of Mathematical Sciences, Beijing Normal University, Beijing 100875, China.}
\email{zwzhao365@163.com}

%\thanks{}
%\footnote{ }

\date{\today} % delete this line to display the current date

%%% BEGIN DOCUMENT

%\tableofcontents

\begin{abstract}
In this paper, the insulated conductivity model with two touching or close-to-touching inclusions is considered in $\mathbb{R}^{d}$ with $d\geq3$. We establish the pointwise upper bounds on the gradient of the solution for the generalized $m$-convex inclusions under these two cases with $m\geq2$, which show that the singular behavior of the gradient in the thin gap between two inclusions is described by the first non-zero eigenvalue of an elliptic operator of divergence form on $\mathbb{S}^{d-2}$. Finally, the sharpness of the estimates is also proved for two touching axisymmetric insulators, especially including curvilinear cubes.
\end{abstract}

\maketitle
%\date{}
%\maketitle
%{\bf Abstract}

%{Keywords}

%{\bf Mathematics Subject Classification(2010)} $35{\rm B}33 \cdot   35{\rm B}40 \cdot  35{\rm B}44$

\section{Introduction}

Assume that $D\subseteq\mathbb{R}^{d}\,(d\geq3)$ is a bounded open set with $C^{2}$ boundary, whose interior embodies two adjacent $C^{2,\gamma}$-subdomains $D_{1}$ and $D_{2}$ with $0<\gamma<1$. Denote $\varepsilon:=\mathrm{dist}(D_{1},D_{2})$, where $\varepsilon\geq0$. Especially when $\varepsilon=0$, it means that $D_{1}$ and $D_{2}$ touch only at one point. Suppose also that $D_{i}$, $i=1,2,$ stay far away from the external boundary $\partial D$. Write $\Omega:=D\setminus\overline{D_{1}\cup D_{2}}$. In this paper, for given boundary data $\varphi\in C^{2}(\partial D)$, we aim to study the singular behavior of the gradient of a solution to the insulated conductivity problem with $C^{\gamma}$ coefficients as follows:
\begin{align}\label{con002}
\begin{cases}
-\partial_{i}(A_{ij}(x)\partial_{j}u)=0,&\hbox{in}\;\Omega,\\
A_{ij}(x)\partial_{j}u(x)\nu_{i}=0,&\mathrm{on}\;\partial D_{i},\,i=1,2,\\
u=\varphi, &\mathrm{on}\;\partial D,
\end{cases}
\end{align}
where the coefficient matrix $(A_{ij}(x))\in C^{\gamma}$ is symmetric and verifies $\varsigma I\leq A(x)\leq\frac{1}{\varsigma}I$ for some positive constant $\varsigma$, $\nu$ is the unit outer normal to the subdomains $D_{1}$ and $D_{2}$.

To state our main results in a precise manner, we first formulate the domain. By suitable translation and rotation of the coordinates, we have
\begin{align*}
D_{1}:=D_{1}^{\ast}+(0',\varepsilon/2),\quad\mathrm{and}\; D_{2}:=D_{2}^{\ast}+(0',-\varepsilon/2),
\end{align*}
where $D_{1}^{\ast}$ and $D_{2}^{\ast}$ are touching only at the origin and satisfy
\begin{align*}
D_{i}^{\ast}\subset\{(x',x_{d})\in\mathbb{R}^{d}\,|\,(-1)^{i+1}x_{d}>0\},\quad i=1,2.
\end{align*}
Here and throughout the paper, we denote $(d-1)$-dimensional variables and domains by adding superscript prime, for instance, $x'$ and $B'$.

Assume further that there exists a small $\varepsilon$-independent constant $R_{0}>0$ such that the portions of $\partial D_{1}$ and $\partial D_{2}$ around the origin are, respectively, the graphs of two $C^{2,\gamma}$ functions $\varepsilon/2+h_{1}(x')$ and $-\varepsilon/2+h_{2}(x')$, where $h_{j}$, $j=1,2$ satisfy the following $m$-convex conditions: for $m\geq2$ and $\gamma>0$, $x'\in B'_{2R_{0}}$,
\begin{enumerate}
{\it\item[(\bf{H1})]
$h_{1}(x')-h_{2}(x')=\kappa_{0}\Big(\sum\limits_{i\in\mathcal{A}}\kappa_{i}|x_{i}|^{2}\Big)^{\frac{m}{2}}+\sum\limits_{j\in\mathcal{B}}\kappa_{j}|x_{j}|^{m}+O(|x'|^{m+\gamma})$,
\item[(\bf{H2})]
$|\nabla_{x'}h_{j}(x')|\leq \tau_{1}|x'|^{m-1},$\;$j=1,2,$
\item[(\bf{H3})]
$\|h_{1}\|_{C^{2}(B'_{2R_{0}})}+\|h_{2}\|_{C^{2}(B'_{2R_{0}})}\leq \tau_{2},$}
\end{enumerate}
where $\mathcal{A}$ and $\mathcal{B}$ are two sets such that $\mathcal{A}\cup\mathcal{B}=\{1,...,d-1\}$ and $\mathcal{A}\cap\mathcal{B}=\emptyset,$  $\kappa_{i}$, $i=0,1,...,d-1$, $\tau_{1}$ and $\tau_{2}$ are all positive constants independent of $\varepsilon$. Here and below, the notation $O(A)$ represents that there exists a $\varepsilon$-independent positive constant $C$ such that $|O(A)|\leq CA$. We should point out that the case of $m=2$ in condition $\mathrm{(}${\bf{H1}}$\mathrm{)}$ corresponds to the strictly convex inclusions, that is, the principal curvatures of interfacial boundaries of inclusions are greater than zero, which has been studied in \cite{DLY2022}. The results in \cite{DLY2022} revealed that the gradient blow-up rate depends on the principal curvatures of the surfaces of insulators, which is different from the blow-up phenomenon occurring in the perfect conductivity problem. However, when $m>2$, the principal curvatures degenerate to be zero and the surfaces of inclusions become flatter. In this case, the shapes of inclusions considered in condition $\mathrm{(}${\bf{H1}}$\mathrm{)}$ are formed by the coupling of two different types of $m$-convex curved surfaces as follows: $\kappa_{0}\big(\sum\limits_{i\in\mathcal{A}}\kappa_{i}|x_{i}|^{2}\big)^{\frac{m}{2}}+O(|x'|^{m+\gamma})$ and $\sum\limits_{j\in\mathcal{B}}\kappa_{j}|x_{j}|^{m}+O(|x'|^{m+\gamma})$. This complex $m$-convex structure will increase the difficulties of analysis and computations, which leads to that the generalization from $m=2$ to $m>2$ is not trivial. In particular, it requires more complex but precise calculations to achieve this generalization. These required changes will be mainly embodied with the following proof procedures for Theorem \ref{thm001}, especially in pages 15--21 below.

We additionally emphasize that the shapes of insulators under condition $\mathrm{(}${\bf{H1}}$\mathrm{)}$ contain curvilinear cubes as follows: the interfacial boundaries of $D_{1}$ and $D_{2}$ are, respectively, formulated as
\begin{align}\label{CUBES001}
\sum^{d-1}_{i=1}|x_{i}|^{m}+|x_{d}-\varepsilon/2-r_{1}|^{m}=r_{1}^{m},\quad\sum^{d-1}_{i=1}|x_{i}|^{m}+|x_{d}+\varepsilon/2+r_{2}|^{m}=r^{m}_{2},
\end{align}
where $r_{i}>0$, $i=1,2,$ are independent of $\varepsilon$. In fact, from Taylor expansion, we have
\begin{align*}
h_{1}(x')-h_{2}(x')=\bar{\kappa}\sum^{d-1}_{i=1}|x_{i}|^{m}+O(|x'|^{2m}),\quad\mathrm{in}\;\Omega_{r_{0}},
\end{align*}
where $\bar{\kappa}=m^{-1}(r_{1}^{1-m}+r_{2}^{1-m})$ and $0<r_{0}<\min\{r_{1},r_{2}\}$. So curvilinear cube belongs to the case when $\mathcal{A}=\emptyset$ and $\kappa_{i}=\bar{\kappa}$, $i\in\mathcal{B}$ in condition $\mathrm{(}${\bf{H1}}$\mathrm{)}$. Moreover, this type of axisymmetric inclusions has been widely used in the manufacture of composite materials due to its regular shape and fine properties.

For $y'\in B'_{R_{0}},\,0<s\leq2R_{0}$, denote a thin gap by
\begin{align*}
\Omega_{s}(y'):=&\{x\in \mathbb{R}^{d}\,|\,-\varepsilon/2+h_{2}(x')<x_{d}<\varepsilon/2+h_{1}(x'),~|x'-y'|<s\},
\end{align*}
whose upper and lower boundaries are, respectively, written by
\begin{align*}
\Gamma^{+}_{s}:=\{x\in\mathbb{R}^{d}\,|\,x_{d}=\varepsilon/2+h_{1}(x'),\;|x'|<s\},
\end{align*}
and
\begin{align*}
\Gamma^{-}_{s}:=\{x\in\mathbb{R}^{d}\,|\,x_{d}=-\varepsilon/2+h_{2}(x'),\;|x'|<s\}.
\end{align*}
For simplicity, write $\Omega_{t}(0')$ as $\Omega_{t}$ in the case of $y'=0'$. Observe that from the standard elliptic estimates, we have $\|u\|_{C^{1}(\Omega\setminus\Omega_{R_{0}/2})}\leq C.$ Therefore, it suffices to study the insulated conductivity problem in a narrow region as follows:
\begin{align}\label{problem006}
\begin{cases}
-\partial_{i}(A_{ij}(x)\partial_{j}u)=0,&\hbox{in}\;\Omega_{R_{0}},\\
A_{ij}(x)\partial_{j}u(x)\nu_{i}=0,&\mathrm{on}\;\Gamma^{\pm}_{R_{0}},\\
\|u\|_{L^{\infty}(\Omega_{R_{0}})}\leq1.
\end{cases}
\end{align}

Consider the following eigenvalue problem:
\begin{align}\label{eigen001}
-\mathrm{div}_{\mathbb{S}^{d-2}}(\kappa(\xi)\nabla_{\mathbb{S}^{d-2}}u(\xi))=\lambda\kappa(\xi)u(\xi),\quad\xi\in\mathbb{S}^{d-2},
\end{align}
where $\kappa(\xi)=\kappa_{0}(\sum_{i\in\mathcal{A}}\kappa_{i}|\xi_{i}|^{2})^{m/2}+\sum_{j\in\mathcal{B}}\kappa_{j}|\xi_{j}|^{m}$ satisfies that $\|\ln\kappa\|_{L^{\infty}(\mathbb{S}^{d-2})}<\infty$. Define the inner product as follows:
\begin{align}\label{inner001}
\langle u,v\rangle_{\mathbb{S}^{d-2}}=\fint_{\mathbb{S}^{d-2}}\kappa(\xi)uv.
\end{align}
By the classical eigenvalue theory of elliptic operator of divergence form on $\mathbb{S}^{d-2}$, we know that each eigenvalue of problem \eqref{eigen001} is real and the corresponding normalized eigenfunctions form an orthonormal basis of $L^{2}(\mathbb{S}^{d-2})$ under the inner-product \eqref{inner001}. Moreover, the first nonzero eigenvalue $\lambda_{1}$ of problem \eqref{eigen001} can be determined by the Rayleigh quotient:
\begin{align*}
\lambda_{1}=\inf\limits_{u\not\equiv0,\,\langle u,1\rangle_{\mathbb{S}^{d-2}}=0}\frac{\fint_{\mathbb{S}^{d-2}}\kappa(\xi)|\nabla_{\mathbb{S}^{d-2}}u|^{2}}{\fint_{\mathbb{S}^{d-2}}\kappa(\xi)|u|^{2}}.
\end{align*}
Denote
\begin{align}\label{degree}
\alpha(\lambda_{1}):=\frac{-(d+m-3)+\sqrt{(d+m-3)^{2}+4\lambda_{1}}}{2},
\end{align}

Unless otherwise stated, in the following the constant $C$ may change from line to line, which depends only on $d,m,\varsigma,\gamma,R_{0},\tau_{1},\tau_{2}$, $\kappa_{i}$, $i=0,1,...,d-1$, and $\|A\|_{C^{\gamma}}$,  but not on $\varepsilon$. First, we establish the pointwise upper bounds on the gradient as follows.
\begin{theorem}\label{thm001}
Suppose that $D_{1},\,D_{2}\subset D\subseteq\mathbb{R}^{d}\,(d\geq3)$ are defined as above, conditions $\mathrm{(}${\bf{H1}}$\mathrm{)}$--$\mathrm{(}${\bf{H3}}$\mathrm{)}$ hold. Let $u\in H^{1}(\Omega_{R_{0}})$ be the solution of \eqref{problem006} with $A_{ij}(0)=\delta_{ij}$. Then

$(i)$ if $\varepsilon=0$ and $x\in\Omega_{R_{0}/2}\setminus\{x'=0'\}$,
\begin{align}\label{ma001}
|\nabla u(x)|\leq C\|u\|_{L^{\infty}(\Omega_{R_{0}})}|x'|^{\alpha(\lambda_{1})-1};
\end{align}

$(ii)$ if $\varepsilon>0$ is sufficiently small and $x\in\Omega_{R_{0}/4}$,
\begin{align}\label{U002}
|\nabla u(x)|\leq C\|u\|_{L^{\infty}(\Omega_{R_{0}})}(\varepsilon+|x'|^{m})^{\frac{\alpha(\lambda_{1})-1}{m}},
\end{align}
where $\alpha(\lambda_{1})$ is defined by \eqref{degree}.
\end{theorem}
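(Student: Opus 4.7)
The plan is to derive the pointwise estimates by an anisotropic rescaling of the narrow gap, combined with an iterative Campanato-type decay whose sharp rate is dictated by the spherical eigenvalue problem \eqref{eigen001}. I would handle the touching case $\varepsilon=0$ first and then recover the close-to-touching case by splitting according to the natural length scale $\varepsilon^{1/m}$.

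First, introduce the anisotropic rescaling $y'=x'/r$, $y_d=x_d/r^m$. Thanks to condition (H1) this maps $\Omega_r$ onto a domain $\widehat\Omega_1$ of $O(1)$ size whose upper and lower boundaries converge, as $r\to 0$, to the graphs of $\pm\tfrac12\kappa(y')$, where $\kappa$ is exactly the weight appearing in \eqref{eigen001}. Because $A(0)=I$ and $A\in C^\gamma$, the rescaled coefficients are $\delta_{ij}+O(r^\gamma)$ and the Neumann condition is preserved up to lower-order corrections. In the rescaled equation the transverse derivative $\partial_{y_d}^2$ dominates while the tangential Laplacian $\Delta_{y'}$ carries a small factor $r^{2(m-1)}$, so any limit of renormalized solutions must be independent of $y_d$, i.e.\ a function $v(y')$. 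Integrating the original equation once across the gap produces the effective degenerate-elliptic equation $\mathrm{div}(\kappa(y')\nabla v)=0$, which in polar coordinates $y'=\rho\xi$ separates as $v=R(\rho)\Phi(\xi)$ with $\Phi$ an eigenfunction of \eqref{eigen001} and $R$ satisfying $\rho^2R''+(d+m-2)\rho R'-\lambda R=0$; the indicial roots are precisely $\alpha(\lambda)$ in \eqref{degree}.

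With this limit picture in hand I would set up the dimension-normalized oscillation
\begin{align*}
E(r):=r^{-(d-1+m)}\inf_{c\in\mathbb{R}}\int_{\Omega_r}|u-c|^2\,dx
\end{align*}
and prove a one-step decay $E(\theta r)\le\theta^{2\alpha(\lambda_1)}E(r)+Cr^{2\gamma}$ for some fixed small $\theta\in(0,1)$ and all sufficiently small $r$. The proof is a compactness/contradiction argument: if the inequality failed along $r_k\to 0$, the renormalized rescaled solutions would converge to a nontrivial $v(y')$ as above with zero average, whose expansion in the orthonormal basis of \eqref{eigen001} has no contribution from the constant mode $\lambda_0=0$, so its slowest surviving radial rate is $\rho^{\alpha(\lambda_1)}$, contradicting the failure of the inequality. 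Iteration then yields $E(r)\le Cr^{2\alpha(\lambda_1)}$. The pointwise bound \eqref{ma001} follows by interior and boundary Schauder estimates applied to the rescaled uniformly elliptic equation and by undoing the anisotropy: a tangential derivative costs a factor $r^{-1}$ and the transverse one a factor $r^{-m}$, but the Neumann boundary condition pins the transverse derivative in terms of the tangential one, yielding $|\nabla u|\le Cr^{\alpha(\lambda_1)-1}$ at distance $|x'|=r$. For the close-to-touching case \eqref{U002}, split at $|x'|\simeq\varepsilon^{1/m}$: when $|x'|\gtrsim\varepsilon^{1/m}$ the separation $\varepsilon$ is subdominant and the $\varepsilon=0$ argument applies with only perturbative corrections; when $|x'|\lesssim\varepsilon^{1/m}$ the gap is essentially a flat slab of thickness $\sim\varepsilon$ and standard uniform elliptic estimates give $|\nabla u|\le C\varepsilon^{(\alpha(\lambda_1)-1)/m}$, matching the right-hand side of \eqref{U002}.

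The main obstacle is the compactness step together with the spectral identification of $\alpha(\lambda_1)$ as the sharp exponent, especially in the presence of the mixed $m$-convex structure in (H1): the weight $\kappa(\xi)$ need not be smooth on $\mathbb{S}^{d-2}$ because the $\mathcal{A}$-term is smooth while the $\mathcal{B}$-term involves $|\xi_j|^m$, which is only $C^{m-1,1}$ on $\mathbb{S}^{d-2}$. One must therefore develop the spectral theory for \eqref{eigen001} in the weighted Sobolev framework induced by \eqref{inner001}, ensure that the separation of variables in the limit problem survives the low regularity of $\kappa$, and produce sufficient quantitative control on the full eigenfunction expansion to close the iteration. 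A secondary technicality is the clean absorption of the $Cr^{2\gamma}$ error generated by the $C^\gamma$ coefficient perturbation at each iteration step, which forces the choice of the decay ratio $\theta$ and is presumably responsible for many of the "more complex but precise calculations" the authors allude to in pages 15--21.
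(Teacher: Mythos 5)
Your plan is in the same spirit as the paper's: flatten/rescale the gap, average over the transverse variable to obtain a degenerate-elliptic equation $\mathrm{div}(\kappa(y')|y'|^m\nabla\bar v)=\mathrm{div}F$ on the cross-section, read off the sharp decay exponent $\alpha(\lambda_1)$ from the eigenvalue problem \eqref{eigen001} by separation of variables, and convert $L^2$ oscillation decay to pointwise gradient bounds. The paper does not use compactness/contradiction; it splits $\bar v=\bar v_1+\bar v_2$ with $\bar v_1$ homogeneous and $\bar v_2$ controlled by Moser iteration in a weighted space (Lemma \ref{lemma002}), and it converts $L^2$ decay to pointwise bounds via the flipping (even reflection) argument of \cite{BLY2010}, not Schauder estimates — that is a deliberate choice because the coefficients are only $C^\gamma$.

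There is, however, a genuine gap in your iteration step. You claim that the one-step decay $E(\theta r)\le\theta^{2\alpha(\lambda_1)}E(r)+Cr^{2\gamma}$ already iterates to $E(r)\le Cr^{2\alpha(\lambda_1)}$. It does not, unless $\gamma>\alpha(\lambda_1)$: the standard iteration lemma only yields $E(r)\le Cr^{2\min\{\alpha(\lambda_1),\gamma\}}$ (with a logarithmic loss when the two exponents agree). The point you are missing is that the error term $Cr^{2\gamma}$ is not a fixed external quantity — it is generated through $F$ by the current pointwise bound on $\nabla u$, which initially is only $|\nabla u|\le C(\varepsilon+|x'|^m)^{-1/m}$ (cf.\ \eqref{DM001} and \eqref{QL001}). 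Each pass of the argument therefore improves the gradient bound by an increment of roughly $\gamma$ in the exponent, and the argument must be rerun with the improved bound fed back into $F$; see the bootstrap in the paper's proof, where $\sigma_0=1/m$ is replaced by $\sigma_1=\sigma_0-\gamma/m$, then $\sigma_2$, and so on for finitely many steps until the cap $\alpha(\lambda_1)$ is attained, with $\gamma$ decreased if necessary to avoid hitting the resonant case $1+\sigma=\alpha(\lambda_1)$. Your proposal runs only a single pass, which does not reach the target exponent when $\gamma<\alpha(\lambda_1)$ — the generic situation. A secondary weakness is the passage from $L^2$ decay to the pointwise estimate: with only $C^\gamma$ coefficients you cannot invoke Schauder directly; one first flattens the gap boundaries by \eqref{change001}, performs even reflection across $\{y_d=\pm\delta_0^{1-1/m}\}$ and periodic extension, and then applies the $W^{1,\infty}$ estimate for elliptic equations with piecewise Hölder coefficients from \cite{LN2003}. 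Likewise your treatment of the inner region $|x'|\lesssim\varepsilon^{1/m}$ in case (ii) is too brief: one needs the oscillation bound on $\Omega_{4\varepsilon^{1/(m+\mu)}}$ obtained from the outer annulus via the maximum principle before the flattened-slab estimate can be applied; a flat-slab estimate alone does not determine the constant in front.
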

\begin{remark}
Using Lemma 5.1 in \cite{DLY2022}, we know that $\lambda_{1}\leq d-2$ and the equality holds if and only if $\kappa$ is constant. Then in the case of $m=2$, if condition $\mathrm{(}${\bf{H1}}$\mathrm{)}$ holds with $\mathcal{B}=\emptyset$ and $\kappa_{i}=\kappa_{j}$, $i,j\in\mathcal{A}$, or $\mathcal{B}=\{1,...,d-1\}$ and $\kappa_{i}=\kappa_{j}$, $i,j\in\mathcal{B}$, or $\mathcal{B}\neq\emptyset$, $\mathcal{B}\neq\{1,...,d-1\}$ and $\kappa_{0}\kappa_{i}=\kappa_{j}$, $i\in\mathcal{A}$, $j\in\mathcal{B}$, then $\lambda_{1}=d-2$, see \cite{DLY2021}. For $m>2$, if $\mathrm{(}${\bf{H1}}$\mathrm{)}$ holds with $\mathcal{B}\neq\emptyset$ or $\mathcal{B}=\emptyset$, $\kappa_{i_{1}}\neq\kappa_{i_{2}}$ for some $i_{1},i_{2}\in\mathcal{A}$, $i_{1}\neq i_{2}$, then $\lambda_{1}=d-2$, see \cite{Z2022}. Otherwise, we have $\lambda_{1}<d-2$.

\end{remark}

\begin{remark}
The results in Theorem \ref{thm001} can be extended to the case when $A_{ij}(0)\neq\delta_{ij}$ by making use of a appropriate linear transformation which reduces it to the case of $A_{ij}(0)=\delta_{ij}$, see Section 7 in \cite{DLY2022} for further details.

\end{remark}

For the purpose of establishing the optimal lower bound on the gradient, we assume that $D=B_{5}$ and $D_{1},\,D_{2}\subset B_{4}$ are two smooth $m$-convex inclusions such that $\overline{D}_{1}\cap\overline{D}_{2}=\{0\}$ and the domain $\Omega=D\setminus\overline{D_{1}\cup D_{2}}$ are symmetric with respect to every $x_{i}$, $i=1,2,...,d$. The optimal lower bound on the gradient is stated as follows.
\begin{theorem}\label{thm002}
For $d\geq3$, let $D,\,D_{1}$ and $D_{2}$ be described as above. Conditions $\mathrm{(}${\bf{H1}}$\mathrm{)}$--$\mathrm{(}${\bf{H3}}$\mathrm{)}$ hold with $\mathcal{A}=\emptyset$ and $\gamma>1-\alpha(\lambda_{1})$. Suppose that the eigenspace corresponding to the first nonzero eigenvalue $\lambda_{1}$ of \eqref{eigen001} contains a function which is odd with respect to some $x_{j_{0}}$, $j_{0}\in\{1,...,d-1\}$. Let $u\in H^{1}(\Omega)$ be the solution of \eqref{con002} with $\varepsilon=0$, $\varphi=x_{j_{0}}$ and $A_{ij}(x)\equiv\delta_{ij}$. Then
\begin{align*}
\limsup\limits_{x\in\Omega,\,|x|\rightarrow0}|x'|^{1-\alpha(\lambda_{1})}|\nabla u(x)|>\frac{1}{C},
\end{align*}
where $\alpha(\lambda_{1})$ is given by \eqref{degree} and the positive constant $C$ depends only on $d,m$, $\kappa_{i}$, $i=0,1,...,d-1$, and upper bounds of $\|\partial D_{j}\|_{C^{4}}$, $j=1,2.$
\end{theorem}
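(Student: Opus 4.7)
The plan is to argue by contradiction. Suppose
\begin{align*}
\limsup_{x\in\Omega,\,|x|\to 0}|x'|^{1-\alpha(\lambda_1)}|\nabla u(x)|=0,
\end{align*}
so that for every $\eta>0$ one has $|\nabla u(x)|\le \eta|x'|^{\alpha(\lambda_1)-1}$ on a shrinking neighborhood of the origin in $\Omega$. First exploit the symmetry: since every $D_i$ and the Laplacian are symmetric in each $x_i$ and $\varphi=x_{j_0}$ is odd in $x_{j_0}$ and even in the other $x_i$, uniqueness forces $u$ to inherit the same parities; in particular $u\equiv 0$ on $\Omega\cap\{x_{j_0}=0\}$. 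Integrating the gradient bound along a segment to the hyperplane $\{x_{j_0}=0\}$ then yields the pointwise estimate
\begin{align*}
|u(x)|\le |x_{j_0}|\,\sup|\nabla u|\le \eta|x'|^{\alpha(\lambda_1)}
\end{align*}
near the origin.

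Next, reduce to a scalar elliptic equation on $\mathbb R^{d-1}$. Since $u$ is nearly $x_d$-independent in the narrow gap by the insulating conditions on $\Gamma^\pm_{R_0}$, define $v(x')$ as the value of $u$ at the mid-height $x_d=\tfrac12(h_2^\ast+h_1^\ast)(x')$; integrating $\Delta u=0$ in $x_d$ shows that $v$ satisfies
\begin{align*}
\operatorname{div}_{x'}\bigl(\delta(x')\nabla_{x'}v(x')\bigr)=\text{higher-order error},\qquad x'\in B'_{R_0},
\end{align*}
with gap width $\delta(x')=h_1(x')-h_2(x')+\varepsilon$. Under (H1) with $\mathcal A=\emptyset$ and $\varepsilon=0$, $\delta(r\xi)=r^m\kappa(\xi)+O(r^{m+\gamma})$ with $\kappa$ exactly the weight in \eqref{eigen001}. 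In polar form $x'=r\xi$, expand $v(r\xi)=\sum_k a_k(r)\phi_k(\xi)$ in the eigenbasis of \eqref{eigen001}; each $a_k$ satisfies the Euler-type ODE
\begin{align*}
(r^{d+m-2}a_k'(r))'-\lambda_k\,r^{d+m-4}a_k(r) = \text{lower order},
\end{align*}
whose indicial roots are $\alpha(\lambda_k)$ and $-(d+m-3)-\alpha(\lambda_k)$. The regularity assumption $\gamma>1-\alpha(\lambda_1)$ keeps the $O(r^{m+\gamma})$ remainder in $\delta$ strictly subcritical against $r^{\alpha(\lambda_1)}$, and the pointwise bound on $v$ forces $a_k(r)=o(r^{\alpha(\lambda_1)})$ for every $k$.

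To extract the contradiction, choose $\phi_1$ in the $\lambda_1$-eigenspace that is odd in $x_{j_0}$ (available by hypothesis). Test the averaged equation against the dual singular solution $r^{-(d+m-3)-\alpha(\lambda_1)}\phi_1(\xi)\chi(r)$ on the annular shell $\{\rho<r<R_0/2\}$, where $\chi$ is a smooth cutoff, and integrate by parts. Because $\alpha(\lambda_1)$ and $-(d+m-3)-\alpha(\lambda_1)$ are the two indicial roots at $\lambda_1$, the bulk integral cancels to leading order, leaving a Wronskian-type boundary integral at $r=R_0/2$ and one at $r=\rho$. Under the contradiction hypothesis the inner boundary term is $o(1)$ as $\rho\to 0^+$, which forces the outer one to vanish in the limit. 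Yet that outer term depends only on $v|_{\partial B'_{R_0/2}}$ through its $\phi_1$-projection, and a non-degeneracy argument (using the standard elliptic estimate $\|u\|_{C^1(\Omega\setminus\Omega_{R_0/2})}\le C$, the odd symmetry of $u$ in $x_{j_0}$, and the non-triviality $\varphi=x_{j_0}\not\equiv 0$, propagated via reflection across the symmetry hyperplanes and unique continuation) shows that this projection is strictly nonzero, yielding the desired contradiction.

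The main obstacle will be the rigorous asymptotic analysis of $v$ near the degenerate point $r=0$: the averaged equation has coefficients that behave like powers of $r$, so convergence of the eigenfunction expansion must be justified in a suitable weighted $L^2$-space, and the projection of the error terms onto each $\phi_k$-mode has to be controlled uniformly. A secondary delicacy is establishing the strict non-vanishing of the $\phi_1$-projection of $v|_{\partial B'_{R_0/2}}$, which requires carefully propagating the non-triviality of the global boundary data $\varphi=x_{j_0}$ into the thin-gap expansion via the reflection symmetries.
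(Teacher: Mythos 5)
Your overall scaffolding (average out $x_d$, expand in the eigenbasis of \eqref{eigen001}, and inspect the $\lambda_1$-mode $a_1(r)$, whose indicial roots are $\alpha(\lambda_1)$ and $-(d+m-3)-\alpha(\lambda_1)$) is the same as the paper's, but the crucial step is missing. Your argument is by contradiction and ultimately reduces everything to the assertion, left essentially unproved, that ``a non-degeneracy argument (using \ldots reflection \ldots and unique continuation) shows that this projection is strictly nonzero.'' That assertion is the entire content of the lower bound: the Wronskian identity you set up determines the coefficient $C_1$ of $r^{\alpha(\lambda_1)}$ in $a_1(r)$ as (outer boundary pairing at $r=R_0/2$) minus (bulk pairing of the error with the dual singular solution), and nothing in unique continuation or reflection symmetry tells you these two numbers cannot cancel. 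Unique continuation gives $u\not\equiv 0$, but it never yields a quantitative lower bound on a single Fourier mode of $u$ restricted to a sphere, and the error terms in the averaged equation depend on $u$ in a way that could, for all your argument shows, conspire to make $C_1=0$.

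The paper closes this gap with a direct comparison argument, not a contradiction: because the domain is symmetric and $m$-convex and $u$ is odd in $x_{j_0}$, the function $x_{j_0}$ is a subsolution of \eqref{con002} on $\Omega\cap\{x_{j_0}\ge 0\}$ (since $\Delta x_{j_0}=0$ and $\partial_\nu x_{j_0}=\nu_{j_0}\ge 0$ there), and $u=x_{j_0}$ on $\{x_{j_0}=0\}$ and on $\partial D$, so the maximum principle gives $|u|\ge|x_{j_0}|$ in $\Omega$. Averaging over the gap yields $|\bar u|\ge |x_{j_0}|$ and hence $U_{1,j_0}(r)\ge Cr$; combined with the expansion $U_{1,j_0}(r)=C_1 r^{\alpha(\lambda_1)}+O(r^{\alpha(\lambda_1)+\gamma})$ and the hypothesis $\gamma>1-\alpha(\lambda_1)$, this forces $C_1>0$. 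Your proof has no analogue of this subsolution step, and without it the non-degeneracy is not established. A secondary issue: your claim that $|u(x)|\le\eta|x'|^{\alpha(\lambda_1)}$ follows by ``integrating the gradient bound along a segment to the hyperplane $\{x_{j_0}=0\}$'' needs care, since a straight $x_{j_0}$-segment at fixed $x_d$ may leave the pinched gap $\Omega$ near the contact point when $\varepsilon=0$; it is cleaner to work with $\bar u$ on $B'_{R_0}$, which vanishes on $\{x_{j_0}=0\}$ and whose domain is star-shaped.
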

\begin{remark}
It is worth mentioning that the validity of the assumed condition ``the eigenspace corresponding to the first nonzero eigenvalue $\lambda_{1}$ of \eqref{eigen001} contains a function which is odd with respect to some $x_{j_{0}}$, $j_{0}\in\{1,...,d-1\}$" will be demonstrated in Section \ref{SEC03} below. In addition, the shape of inclusions considered in Theorem \ref{thm002} also contains curvilinear cubes with the same radii in \eqref{CUBES001}.

\end{remark}

The paper is organized as follows. The proofs of Theorems \ref{thm001} and \ref{thm002} are, respectively, given in Sections \ref{SEC02} and \ref{SEC03}. In the rest of the introduction we review some earlier relevant results.

The mathematical model for the conductivity problem can be described by the following elliptic equations of divergence form:
\begin{align}\label{pro006}
\begin{cases}
\mathrm{div}(a_{k}(x)\nabla u_{k})=0,&\mathrm{in}\;D,\\
u_{k}=\varphi,&\mathrm{on}\;\partial D,
\end{cases}\quad a_{k}(x)=&
\begin{cases}
k\in(0,\infty),&\mathrm{in}\;D_{1}\cup D_{2},\\
1,&\mathrm{in}\;\Omega,
\end{cases}
\end{align}
where $\varphi\in C^{2}(\partial D)$ is a given boundary data and $k$ is called the conductivity. When $k$ tends to zero, problem \eqref{pro006} becomes the insulated conductivity problem, while it turns into the perfect conductivity problem if $k$ goes to infinity. It is well known that there always appears blow-up of the gradient $|\nabla u|$ for the insulated or perfect conductivity problem, as the distance between two adjacent inclusions approaches to zero. Babu\u{s}ka et al. \cite{BASL1999} were the first to propose the problem of estimating $|\nabla u|$ in the close touching regime. In \cite{BASL1999} they analyzed computationally the damage and fracture of composites modeled by the Lam\'{e} system with finite coefficients and found that the gradient of solutions stays bounded independent of the distance between inclusions. For two touching disks, Bonnetier and Vogelius \cite{BV2000} proved the boundness for the conductivity problem \eqref{pro006}. Li and Vogelius \cite{LV2000} then extended their results to general second-order elliptic equations of divergence form with piecewise H\"{o}lder coefficients. In particular, their results hold for arbitrarily smooth shape of inclusions in all dimensions. The subsequent work \cite{LN2003} completed by Li and Nirenberg further extended to general second-order elliptic systems of divergence form, especially covering the Lam\'{e} system. This especially demonstrates the numerical observation in \cite{BASL1999}. For more related investigations on the elliptic equation with piecewise constant coefficients, see \cite{DL2019,DZ2016,JK2021,CEG2014,KL2019}.

Ammari et al.\cite{AKL2005,AKLLL2007} firstly made use of the layer potential techniques to establish the optimal gradient estimates for the insulated conductivity problem with two close-to-touching disks, which showed that the blow-up rate of the gradient is $\varepsilon^{-1/2}$ in two dimensions. For two adjacent spherical insulators in dimension three, Yun \cite{Y2016} obtained the optimal gradient estimates in the shortest segment between two inclusions and captured the blow-up rate of order $\varepsilon^{\frac{\sqrt{2}-2}{2}}$. For the general strictly convex inclusions, Bao, Li and Yin \cite{BLY2010} developed a ``flipping" technique to establish the pointwise upper bound estimate of the gradient in all dimensions as follows:
\begin{align}\label{LYL90}
|\nabla u|\leq C(\varepsilon+|x'|^{2})^{-1/2},\quad\text{in }\Omega_{R_{0}}.
\end{align}
Li and Yang \cite{LY2021,LY202102} further considered the general $m$-convex inclusions with $m\geq2$ and obtained
\begin{align}\label{DM001}
|\nabla u(x)|\leq C(\varepsilon+|x'|^{m})^{-1/m+\beta},\quad\mathrm{in}\;\Omega_{R_{0}},
\end{align}
for some inexplicit $\beta>0$. This improves the result in \eqref{LYL90}. Moreover, the upper bound \eqref{DM001} indicates that the blow-up rate $\varepsilon^{-1/m+\beta}$ will decrease as the convexity index $m$ increase. This also implies that curvilinear cubes are superior to spheres from the view of shape design of insulated materials. For the purpose of making clear the value of $\beta$ in \eqref{DM001}, Weinkove \cite{W2021} constructed a appropriate auxiliary function and used the maximum principle to solve an explicit constant $\beta(d)$ for two nearly touching balls in dimension greater than three. When condition $\mathrm{(}${\bf{H1}}$\mathrm{)}$ becomes $(h_{1}-h_{2})(x')=\kappa_{0}|x'|^{m}+O(|x'|^{m+\gamma})$ in $\Omega_{R_{0}}$, Dong, Li and Yang \cite{DLY2021} established the optimal upper and lower bounds on the gradient in the case of $m=2$ and $d\geq3$ and found that the optimal value of $\beta$ is $[-(d-1)+\sqrt{(d-1)^{2}+4(d-2)}]/4$. Zhao \cite{Z2022} further extended the results to the case when $m>2$ and $d\geq3$ and revealed that the optimal gradient blow-up rate is $\varepsilon^{-1/m+\beta(d,m)}$ with $\beta(d,m)=[-(d+m-3)+\sqrt{(d+m-3)^{2}+4(d-2)}]/(2m)$. Ma \cite{M2022} recently improved the auxiliary function constructed in \cite{W2021} and obtained the same $\beta$ as in \cite{DLY2021}.

With regard to the perfect conductivity problem, there has been a long list of literature making clear the blow-up phenomenon, for example, see \cite{AKLLL2007,BC1984,AKL2005,Y2007,Y2009,K1993,BLY2009,LY2009,BLY2010,L2012} and the references therein. For nonlinear equation, we refer to \cite{CS2019,CS201902,G2012} for an interested reader.

\section{The proof of Theorem \ref{thm001}}\label{SEC02}
For $\varepsilon\geq0$, define
\begin{align*}
\delta:=\delta(x')=:\varepsilon+\kappa\Big(\frac{x'}{|x'|}\Big)|x'|^{m},\quad |x'|\leq R_{0},
\end{align*}
where
\begin{align*}
\kappa\Big(\frac{x'}{|x'|}\Big)=\kappa_{0}\Big(\sum\limits_{i\in\mathcal{A}}\kappa_{i}|x_{i}|^{2}|x'|^{-2}\Big)^{\frac{m}{2}}+\sum\limits_{j\in\mathcal{B}}\kappa_{j}|x_{j}|^{m}|x'|^{-m}.
\end{align*}
For $\sigma,\tau\in\mathbb{R}$, define a norm as follows:
\begin{align*}
\|F\|_{\varepsilon,\gamma,\sigma,B_{R}'}:=\sup\limits_{x'\in B_{R}'}|x'|^{-\sigma}(\varepsilon+|x'|^{m})^{\tau-1}|F(x')|,\;\,\mathrm{with}\;0<R\leq R_{0}.
\end{align*}
Define the weighted space $H^{1}(B_{R_{0}}',|x'|^{m}dx')$ under a weighted norm as follows:
\begin{align*}
\|f\|_{H^{1}(B_{R_{0}}',|x'|^{m}dx')}:=\left(\int_{B_{R_{0}}'}|f|^{2}|x'|^{m}dx'\right)^{\frac{1}{2}}+\left(\int_{B_{R_{0}}'}|\nabla f|^{2}|x'|^{m}dx'\right)^{\frac{1}{2}}.
\end{align*}
For $0<\rho<R_{0}$, write
\begin{align*}
(f)^{\kappa}_{\partial B_{\rho}'}:=&\left(\int_{\partial B_{\rho}'}\kappa\Big(\frac{x'}{|x'|}\Big)dS\right)^{-1}\int_{\partial B_{\rho}'}\kappa\Big(\frac{x'}{|x'|}\Big)f(x')dS,\\
(f)^{\kappa}_{B_{\rho}'}:=&\left(\int_{B_{\rho}'}\kappa\Big(\frac{x'}{|x'|}\Big)dx'\right)^{-1}\int_{B_{\rho}'}\kappa\Big(\frac{x'}{|x'|}\Big)f(x')dx'.
\end{align*}

In order to prove Theorem \ref{thm001}, we need the following two propositions.
\begin{prop}\label{prop001}
For $d\geq 3$, $1+\sigma>0$, $1+\sigma\neq\alpha(\lambda_{1})$, let $\bar{v}\in H^{1}(B_{R_{0}}',|x'|^{m}dx')$ be a solution of
\begin{align}\label{KTMZ001}
\mathrm{div}(\delta\nabla\bar{v})=\mathrm{div}F,\quad\mathrm{in}\;B'_{R_{0}},
\end{align}
with $\varepsilon=0$ and $\|F\|_{\varepsilon,\sigma,0,B_{R_{0}}'}<\infty$. Then for any $\rho\in(0,R_{0})$,
\begin{align*}
\left(\fint_{\partial B_{\rho}'}|\bar{v}-\bar{v}(0')|^{2}\right)^{1/2}\leq C\|F\|_{\varepsilon,\sigma,0,B_{R_{0}}'}\rho^{\tilde{\alpha}(\lambda_{1})},
\end{align*}
where
\begin{align}\label{alpha001}
\tilde{\alpha}(\lambda_{1})=&\min\{\alpha(\lambda_{1}),1+\sigma\},
\end{align}
with $\alpha(\lambda_{1})$ given by \eqref{degree}.

\end{prop}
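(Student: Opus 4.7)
\emph{Plan.} My approach is to attack this by an eigenfunction decomposition along $\mathbb{S}^{d-2}$, reducing the degenerate elliptic equation \eqref{KTMZ001} to a family of Cauchy--Euler radial ODEs. Writing $x'=r\xi$ with $r=|x'|$ and $\xi\in\mathbb{S}^{d-2}$, and using $\delta(x')=\kappa(\xi)r^{m}$ (since $\varepsilon=0$), a direct computation in polar coordinates on $\mathbb{R}^{d-1}$ gives
\begin{equation*}
\operatorname{div}(\delta\nabla u)=\kappa(\xi)r^{m}\partial_{r}^{2}u+(d+m-2)\kappa(\xi)r^{m-1}\partial_{r}u+r^{m-2}\operatorname{div}_{\mathbb{S}^{d-2}}\bigl(\kappa\nabla_{\mathbb{S}^{d-2}}u\bigr).
\end{equation*}
Expanding $\bar v(r,\xi)=\sum_{k\geq 0}v_k(r)\phi_k(\xi)$ in the orthonormal basis of eigenfunctions $\{\phi_k\}$ of \eqref{eigen001}, with eigenvalues $0=\lambda_0<\lambda_1\leq\lambda_2\leq\cdots$, each radial coefficient $v_k$ is governed by a Cauchy--Euler ODE of the form $r^{2}v_k''+(d+m-2)rv_k'-\lambda_k v_k=f_k(r)$, whose indicial exponents are exactly $\alpha_k^{\pm}=\tfrac{1}{2}[-(d+m-3)\pm\sqrt{(d+m-3)^{2}+4\lambda_k}]$; in particular $\alpha_0^{+}=0$ and $\alpha_1^{+}=\alpha(\lambda_{1})$ is the slowest positive rate for $k\geq 1$, which is where the factor $\alpha(\lambda_{1})$ in the proposition ultimately originates.

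Next I would estimate each mode. Projecting \eqref{KTMZ001} against $\phi_k$ in the $\kappa$-weighted weak form and integrating by parts on the source side, the pointwise bound $|F(x')|\leq\|F\|_{\varepsilon,\sigma,0,B_{R_0}'}\,|x'|^{\sigma+m}$ (which is what the norm encodes at $\varepsilon=0$) turns the right-hand side into a function $f_k(r)$ whose size is such that variation of parameters produces a particular solution of order $\rho^{1+\sigma}$. From the homogeneous side, the branch $r^{\alpha_k^-}$ is excluded by the finiteness of $\|\bar v\|_{H^{1}(B_{R_0}',|x'|^{m}dx')}$ near the origin, and the hypothesis $1+\sigma\neq\alpha(\lambda_{1})$ is exactly what prevents the logarithmic factor that would otherwise appear when $1+\sigma=\alpha_1^+$. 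Matching on $\partial B_{R_0}'$ and using the $L^2$ Dirichlet control furnished by the weighted Sobolev norm yields $|v_k(\rho)|\leq C\|F\|_{\varepsilon,\sigma,0,B_{R_0}'}\,\rho^{\min\{\alpha_k^+,\,1+\sigma\}}$ for $k\geq 1$. For the $k=0$ mode, which is the $\kappa$-weighted spherical average, the ODE integrates directly to $v_0(\rho)-v_0(0)=O(\rho^{1+\sigma})$, which simultaneously makes sense of the pointwise value $\bar v(0')$ and handles the constant part after subtracting $\bar v(0')$.

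Finally I would reassemble by Parseval in the inner product \eqref{inner001}: using $\min_{k\geq 1}\alpha_k^+=\alpha(\lambda_{1})$ and the $k=0$ bound above,
\begin{equation*}
\fint_{\partial B_{\rho}'}\kappa(\xi)|\bar v-\bar v(0')|^{2}\,dS\leq C\|F\|_{\varepsilon,\sigma,0,B_{R_0}'}^{2}\,\rho^{2\tilde\alpha(\lambda_{1})},
\end{equation*}
and because $\|\ln\kappa\|_{L^{\infty}(\mathbb{S}^{d-2})}<\infty$ gives $\kappa\asymp 1$, the $\kappa$-weighted mean on the left is comparable to the unweighted mean appearing in the statement. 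The main obstacle I anticipate is not the algebra of the ODEs but the rigorous justification of this spectral expansion in the degenerate weighted Sobolev space $H^{1}(B_{R_0}',|x'|^{m}dx')$: a priori it is not obvious that the trace $v_k(\rho)$ is well defined for every $\rho\in(0,R_0)$, or that $\bar v$ admits a pointwise value at the origin, given that the weight $|x'|^{m}$ degenerates there. If that analytic step proves delicate, my fallback is a Campanato-type dyadic iteration comparing $\bar v$ on $B_{\rho}'$ with the solution of the corresponding homogeneous problem, extracting the decay rate $\alpha(\lambda_{1})$ from a compactness/contradiction argument based on the Rayleigh quotient characterization of $\lambda_{1}$, and handling the inhomogeneous contribution by a direct weighted energy estimate followed by scaling.
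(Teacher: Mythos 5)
Your primary approach --- expanding the full inhomogeneous solution $\bar v$ in spherical eigenmodes, reducing to a family of inhomogeneous Cauchy--Euler ODEs, and estimating each radial coefficient by variation of parameters --- is a genuinely different route from the paper's. The paper instead uses a Campanato-style decomposition $\bar v=\bar v_1+\bar v_2$ on each ball $B_R'$: the homogeneous piece $\bar v_1$ (with $\bar v_1=\bar v$ on $\partial B_R'$) is analyzed by exactly the eigenfunction expansion you describe but \emph{only for the source-free equation} (Lemma 2.4), which gives a clean decay $(\rho/R)^{\alpha(\lambda_1)}$ on spherical $L^2$ means; the inhomogeneous piece $\bar v_2\in H^1_0$ is estimated in $L^\infty$ by a Moser iteration based on the Caffarelli--Kohn--Nirenberg inequality in the degenerate weight $|x'|^m$ (Lemma 2.5), which after scaling gives $\|\bar v_2\|_{L^\infty(B_R')}\leq CR^{1+\sigma}$. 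These two pieces are then combined by a dyadic iteration of $\omega(\rho)\leq(\rho/R)^{\alpha(\lambda_1)}\omega(R)+CR^{1+\sigma}$, and the hypothesis $1+\sigma\neq\alpha(\lambda_1)$ enters precisely to sum the geometric series and produce the exponent $\tilde\alpha=\min\{\alpha(\lambda_1),1+\sigma\}$. In other words, your ``fallback'' (Campanato-type dyadic iteration against the homogeneous problem plus a direct energy estimate for the source) is, in fact, the method of the paper, and the $L^\infty$ bound via Moser replaces your direct weighted energy estimate.

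The trade-offs are as follows. Your direct mode-by-mode treatment avoids the Moser iteration and the CKN inequality entirely, and gives the exponent $\min\{\alpha_k^+,1+\sigma\}$ on each mode in one stroke. But it buys this at the cost of the analytic delicacies you yourself flag: one must justify the spectral expansion of $\bar v\in H^1(B_{R_0}',|x'|^m dx')$, verify that the traces $v_k(\rho)$ are well defined for every $\rho$, and --- importantly --- obtain constants in the variation-of-parameters bounds that are uniform over $k$ after integrating by parts on the source side (the projections onto $Y_{k,l}$ involve $\nabla_\xi Y_{k,l}$, which grows like $\sqrt{\lambda_k}$, and the Wronskian factor $\sim 1/\sqrt{\lambda_k}$ must be used to compensate before Parseval is applied). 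The paper sidesteps all of this: it only applies the eigenfunction expansion to the \emph{homogeneous} piece $\bar v_1$, first upgrading it to $C^\beta$ continuity via the Fabes--Kenig--Serapioni theory of degenerate elliptic equations, and killing the singular branches $r^{\alpha_k^-}$ (and $r^{3-m-d}$ for $k=0$) by a finite-energy contradiction in the weighted $H^1$ space; the inhomogeneous piece never needs a modal analysis because the $L^\infty$ bound already gives the sharp $\rho^{1+\sigma}$ after rescaling. One small precision: the constant in the proposition (and in the paper's iteration) does depend on $\omega(R_0)=(\fint_{\partial B_{R_0}'}\kappa|\bar v|^2)^{1/2}$, which in your scheme would enter through the boundary data at $r=R_0$ in the matching step; this is consistent with the way the proposition is applied later, where $\|u\|_{L^\infty}\leq 1$ makes $\omega(R_0)$ a fixed constant.
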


Introduce some constants as follows:
\begin{align}
\theta_{1}=&\Bigg[\kappa_{0}^{2}\bigg(\sum_{i\in\mathcal{A}}\kappa_{i}^{2}\bigg)^{m/2}+\sum_{j\in\mathcal{B}}\kappa_{j}^{2}\Bigg]^{1/2},\label{CONSTANT001}\\
\theta_{2}=&\Bigg[\kappa_{0}^{4}\bigg(\sum_{i\in\mathcal{A}}\kappa_{i}^{4}\bigg)\bigg(\sum_{i\in\mathcal{A}}\kappa_{i}^{2}\bigg)^{m-2}+\sum_{j\in\mathcal{B}}\kappa_{j}^{4}\Bigg]^{1/4},\label{CONSTANT002}\\
\theta_{3}=&\left(2^{-\frac{m-2}{2}}\min\big\{2^{-\frac{(m-2)(\mathfrak{b}-1)}{2}},1\big\}\min\Big\{\kappa_{0}\min_{i\in\mathcal{A}}\kappa_{i}^{m/2},\min_{j\in\mathcal{B}}\kappa_{j}\Big\}\right)^{1/m-1},\label{CONSTANT003}
\end{align}
where $\mathfrak{b}:=\mathrm{card}(\mathcal{B})$ denotes the number of elements in set $\mathcal{B}$. Define
\begin{align}\label{cvalue001}
c_{0}:=\min\bigg\{\frac{1}{4\big(1+\theta_{1}\big)^{1/m}},\frac{1}{2^{m}m\max\{\theta_{2},1\}\max\{\theta_{3},1\}}\bigg\}.
\end{align}
For $0<\rho<R_{0}$, define
\begin{align}\label{MZA001}
B'_{(1\pm\bar{c}_{0})\rho}:=B'_{(1+\bar{c}_{0})\rho}\setminus B'_{(1-\bar{c}_{0})\rho},\quad
\text{with }\bar{c}_{0}:=2c_{0}(1+\theta_{1})^{1/m}.
\end{align}

Remark that these parameters $\theta_{i}$, $i=1,2,3$, $c_{0}$ and $\bar{c}_{0}$ are introduced to achieve the following two goals. The first goal is to give a precise characterization in terms of the equivalence of the height of small thin gap in \eqref{QWN001}. The second one is to choose a suitable small neighbourhood centered at every point of the considered thin gap for the purpose of using the ``flipping argument" developed in \cite{BLY2010} to derive the pointwise upper bounds on the gradient in the following. See pages 15--21 below for further explanations and more details.

\begin{prop}\label{prop003}
For $d\geq 3$, $1+\sigma>0$, $1+\sigma\neq\alpha(\lambda_{1})$, let $\bar{v}\in H^{1}(B_{R_{0}}')$ be a solution of
\begin{align*}
\mathrm{div}(\delta\nabla\bar{v})=\mathrm{div}F,\quad\mathrm{in}\;B'_{R_{0}},
\end{align*}
with $\varepsilon>0$, $\|F\|_{\varepsilon,\sigma,0,B_{R_{0}}'}<\infty$ and $\|\nabla\bar{v}\|_{\varepsilon;-\tau,1,B_{R_{0}}'}<\infty$ for $\tau\leq1$.  Then for $0<\rho<(1-\bar{c}_{0})^{2}R\leq (1-\bar{c}_{0})^{2}R_{0}$,
\begin{align*}
&\left(\fint_{B'_{(1\pm\bar{c}_{0})\rho}}\Big|\bar{v}(x')-(\bar{v})^{\kappa}_{B'_{(1\pm\bar{c}_{0})\rho}}\Big|^{2}\right)^{1/2}\notag\\
&\leq C\Big(\frac{\rho}{R}\Big)^{\alpha(\lambda_{1})}\left(\fint_{B'_{(1\pm\bar{c}_{0})R}}\Big|\bar{v}(x')-(\bar{v})^{\kappa}_{B'_{(1\pm\bar{c}_{0})R}}\Big|^{2}\right)^{1/2}\notag\\
&+C\Big(\frac{R}{\rho}\Big)^{\frac{d+m-2}{2}}\left[R^{1+\sigma}\left(\frac{\sqrt{\varepsilon}}{R^{m/2}}+1\right)\|F\|_{\varepsilon,\sigma,0,B_{R_{0}}'}+\Big(\frac{\varepsilon}{R^{m}}\Big)^{\beta(\lambda_{1})}R^{1-\tau}\|\nabla\bar{v}\|_{\varepsilon;-\tau,1,B_{R_{0}}'}\right],
\end{align*}
where $\alpha(\lambda_{1})$ is defined by \eqref{degree}, and
\begin{align}\label{beta001}
\beta(\lambda_{1})=&
\begin{cases}
\frac{2\alpha(\lambda_{1})+d+m-3}{2m},&m>d+2\alpha(\lambda_{1})-3,\\
\text{any }\alpha<1,&m=d+2\alpha(\lambda_{1})-3,\\
1,&m<d+2\alpha(\lambda_{1})-3.
\end{cases}
\end{align}

\end{prop}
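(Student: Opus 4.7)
The strategy is a Campanato-type freezing argument: compare $\bar v$ to the solution $w$ of the $\varepsilon=0$ model problem already analysed in Proposition \ref{prop001}, and absorb the mismatch via an energy estimate. Fix $R\le R_0$ and introduce $w\in H^{1}(B'_R,|x'|^m\,dx')$ as the solution of
\begin{equation*}
\mathrm{div}\bigl(\kappa(x'/|x'|)|x'|^{m}\nabla w\bigr)=\mathrm{div}F\quad\text{in }B'_R,\qquad w=\bar v\ \text{on }\partial B'_R.
\end{equation*}
Proposition \ref{prop001}, applied to $w$, yields an oscillation decay at rate $(\rho/R)^{\alpha(\lambda_1)}$ on the annular comparison sets $B'_{(1\pm\bar c_0)\rho}$, plus a contribution of size $R^{1+\sigma}\|F\|_{\varepsilon,\sigma,0,B'_{R_0}}$ coming from the inhomogeneity $\mathrm{div}F$; these two terms become the leading contributions in the claim.

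The next step is to estimate the difference $\phi:=\bar v-w$. It satisfies $\phi=0$ on $\partial B'_R$ and, since $\delta-\kappa|x'|^m=\varepsilon$,
\begin{equation*}
\mathrm{div}(\delta\nabla\phi)=-\mathrm{div}(\varepsilon\nabla w)\quad\text{in }B'_R.
\end{equation*}
Testing with $\phi$ and using a weighted Poincar\'e inequality (non-degeneracy of $\kappa$ in the angular variable follows from $\|\ln\kappa\|_{L^{\infty}(\mathbb{S}^{d-2})}<\infty$) gives $\int_{B'_R}\delta|\nabla\phi|^{2}\le C\varepsilon^{2}\int_{B'_R}\delta^{-1}|\nabla w|^{2}$ after Cauchy--Schwarz and absorption. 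The hypothesis $\|\nabla\bar v\|_{\varepsilon;-\tau,1,B'_{R_0}}<\infty$ is then used to control $\nabla w$ close to the origin, where the comparison with the degenerate model problem loses effectiveness and where the $\sqrt\varepsilon/R^{m/2}$ factor multiplying $\|F\|$ originates. Passing from an $L^{2}$ bound on $B'_R$ to one on the annulus $B'_{(1\pm\bar c_0)\rho}$ produces the factor $(R/\rho)^{(d+m-2)/2}$ from the comparison of weighted volumes.

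The sharp factor $(\varepsilon/R^{m})^{\beta(\lambda_1)}$ should emerge from a dyadic decomposition of $B'_R$ into annuli $A_k\simeq\{|x'|\sim 2^{-k}R\}$: on $A_k$ one has $\delta^{-1}\sim\min(2^{km}R^{-m},\varepsilon^{-1})$ and, by the decay already established for $w$, $|\nabla w|^{2}$ is of order $r^{2(\alpha(\lambda_1)-1)}$ times the data factor, so that $\varepsilon^{2}\int_{B'_R}\delta^{-1}|\nabla w|^{2}$ reduces to a geometric series whose ratio depends on the sign of $d+2\alpha(\lambda_1)-3-m$. Truncating the sum at the scale $r\sim(\varepsilon/\kappa)^{1/m}$ where $\varepsilon$ starts to dominate $\kappa r^{m}$ then yields the three cases of \eqref{beta001}. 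I expect the main obstacle to be precisely this sharp dyadic balance, especially in the critical case $m=d+2\alpha(\lambda_1)-3$ where a logarithmic divergence forces the weaker statement that $\beta(\lambda_1)$ may be any $\alpha<1$ rather than $1$ itself. Finally, combining the comparison decay for $w$ with the $L^{2}$ bound for $\phi$ via the triangle inequality and changing the reference constant from $(w)^{\kappa}$ to $(\bar v)^{\kappa}$ on each annulus delivers the claimed estimate.
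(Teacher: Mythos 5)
Your decomposition differs from the paper's in a subtle but important way. You compare $\bar v$ with $w$ solving the \emph{inhomogeneous} $\varepsilon=0$ equation $\mathrm{div}(\kappa|x'|^{m}\nabla w)=\mathrm{div}F$ with boundary data $\bar v$; the paper instead compares with $\bar v_{1}$ solving the \emph{homogeneous} equation $\mathrm{div}(\kappa|x'|^{m}\nabla\bar v_{1})=0$ with the same boundary data, and carries both $\mathrm{div}F$ and the error $\varepsilon\Delta\bar v_{1}$ in the correction $\bar v_{2}=\bar v-\bar v_{1}$. The paper's choice is what makes the estimate $|\nabla\bar v_{1}(x')|\le C|x'|^{\alpha(\lambda_{1})-1}R^{1-\tau-\alpha(\lambda_{1})}\|\nabla\bar v\|_{\varepsilon,-\tau,1}$ available via Lemma~\ref{lemma001} (the separated-variable expansion plus \eqref{IZ001} and the maximum principle), which is precisely the decay that produces the sharp exponent $\beta(\lambda_{1})$ on the $\|\nabla\bar v\|$ term.

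Here is the gap in your sketch. You assert that ``by the decay already established for $w$, $|\nabla w|^{2}$ is of order $r^{2(\alpha(\lambda_{1})-1)}$ times the data factor.'' That is not what Proposition~\ref{prop001} gives: it yields oscillation decay at rate $\tilde\alpha(\lambda_{1})=\min\{\alpha(\lambda_{1}),1+\sigma\}$, and by interior estimates $\nabla w$ near the origin behaves like $r^{\tilde\alpha(\lambda_{1})-1}$, not $r^{\alpha(\lambda_{1})-1}$. When $1+\sigma<\alpha(\lambda_{1})$ the inhomogeneous part dominates and your dyadic sum would produce a wrong exponent. To repair this you would have to split $w=w_{1}+w_{2}$ (homogeneous with boundary data $\bar v$, plus inhomogeneous with zero boundary data), feed the $r^{\alpha(\lambda_{1})-1}$ decay of $\nabla w_{1}$ (with coefficient $R^{1-\tau-\alpha}\|\nabla\bar v\|$) into the $\varepsilon^{2}\int\delta^{-1}|\nabla w_{1}|^{2}$ piece to get $(\varepsilon/R^{m})^{2\beta(\lambda_{1})}R^{2-2\tau}\|\nabla\bar v\|^{2}$, and observe that the $\nabla w_{2}$ piece contributes at most $(\varepsilon/R^{m})^{\beta''}R^{1+\sigma}\|F\|$ with some $\beta''\le1$, which is harmlessly absorbed by the leading $R^{1+\sigma}(\sqrt\varepsilon/R^{m/2}+1)\|F\|$ term. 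In effect this reproduces the paper's decomposition inside your own, so the extra comparison function $w$ does no work.

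A second, smaller issue: the passage from the gradient energy $\int\delta|\nabla\phi|^{2}$ to the $L^{2}$ average $\fint_{B'_{(1\pm\bar c_{0})\rho}}|\phi|^{2}$, and the appearance of the factor $(R/\rho)^{(d+m-2)/2}$, is not a matter of ``comparison of weighted volumes''; it relies on the weighted Hardy-type trace inequality of Lemma~\ref{lemmaC01}, namely $\sup_{0<r<1}r^{d+m-2}\fint_{\partial B'_{r}}|\phi|^{2}\le C\int_{B'_{1}}|x'|^{m}|\nabla\phi|^{2}$, applied after rescaling and then integrated over $r\in((1-\bar c_{0})\rho,(1+\bar c_{0})\rho)$. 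Without citing or proving this inequality, the step you describe does not hold. With these two points corrected your argument would close, but the natural presentation then coincides with the paper's homogeneous-comparison route.
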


To prove Propositions \ref{prop001} and \ref{prop003}, we start by decomposing the solution $\bar{v}$ of \eqref{KTMZ001} as follows:
\begin{align}\label{ADAD001}
\bar{v}:=\bar{v}_{1}+\bar{v}_{2},\quad\mathrm{in}\;B_{R}',\;0<R\leq R_{0},
\end{align}
where $\bar{v}_{i},i=1,2,$ respectively, solve
\begin{align}\label{de001}
\begin{cases}
\mathrm{div}(\delta\nabla\bar{v}_{1})=0,& \mathrm{in}\;B_{R}',\\
\bar{v}_{1}=\bar{v},&\mathrm{on}\;\partial B_{R}',
\end{cases}
\end{align}
and
\begin{align}\label{de002}
\begin{cases}
\mathrm{div}(\delta\nabla\bar{v}_{2})=\mathrm{div}F,&\mathrm{in}\;B_{R}',\\
\bar{v}_{2}=0,&\mathrm{on}\;\partial B_{R}'.
\end{cases}
\end{align}

With regard to $\bar{v}_{1}$, we obtain
\begin{lemma}\label{lemma001}
For $d\geq 3$, let $\bar{v}_{1}\in H^{1}(B_{R}',|x'|^{m}dx')$ be a solution of \eqref{de001} with $\varepsilon=0$. Then $\bar{v}_{1}\in C^{\beta}(B_{R}')$ for some $\beta=\beta(d,m,\|\ln\kappa\|_{L^{\infty}(\mathbb{S}^{d-2})})$. Furthermore, for any $0<\rho<R$, we have
\begin{align*}
\bar{v}_{1}(0')=(\bar{v}_{1})^{\kappa}_{\partial B_{\rho}'},
\end{align*}
and
\begin{align*}
\left(\fint_{\partial B_{\rho}'}\kappa\Big(\frac{x'}{|x'|}\Big)|\bar{v}_{1}-\bar{v}_{1}(0')|^{2}\right)^{\frac{1}{2}}\leq\left(\frac{\rho}{R}\right)^{\alpha(\lambda_{1})}\left(\fint_{\partial B_{R}'}\kappa\Big(\frac{x'}{|x'|}\Big)|\bar{v}_{1}-\bar{v}_{1}(0')|^{2}\right)^{\frac{1}{2}},
\end{align*}
where $\alpha(\lambda_{1})$ is given in \eqref{degree}.

\end{lemma}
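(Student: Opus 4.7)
The strategy is a separation-of-variables analysis in polar coordinates $x'=r\xi$ with $r=|x'|$ and $\xi\in\mathbb{S}^{d-2}$. A direct computation using the decomposition $\nabla f=(\partial_r f)\hat r+r^{-1}\nabla_{\mathbb{S}^{d-2}}f$ recasts the equation $\mathrm{div}(\kappa(\xi)|x'|^m\nabla\bar v_1)=0$ as
\begin{equation*}
r^2\,\kappa(\xi)\,\partial_r^2\bar v_1+(d+m-2)\,r\,\kappa(\xi)\,\partial_r\bar v_1+\mathrm{div}_{\mathbb{S}^{d-2}}\bigl(\kappa(\xi)\nabla_{\mathbb{S}^{d-2}}\bar v_1\bigr)=0,
\end{equation*}
whose spherical part is precisely the operator in \eqref{eigen001}. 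Let $\{\phi_n\}_{n\geq 0}$ be the $\langle\cdot,\cdot\rangle_{\mathbb{S}^{d-2}}$-orthonormal eigenbasis provided by the classical spectral theorem recalled before \eqref{inner001}, with $\lambda_0=0$, $\phi_0\equiv 1$, and $\lambda_n\geq\lambda_1$ for $n\geq 1$. Expanding $\bar v_1(r\xi)=\sum_n a_n(r)\phi_n(\xi)$, each $a_n$ satisfies the Euler ODE
\begin{equation*}
r^2 a_n''+(d+m-2)r\,a_n'-\lambda_n\,a_n=0,
\end{equation*}
with indicial roots $\alpha_n^\pm=\tfrac{-(d+m-3)\pm\sqrt{(d+m-3)^2+4\lambda_n}}{2}$; in particular $\alpha_1^+=\alpha(\lambda_1)$ as defined in \eqref{degree}.

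The weighted hypothesis $\bar v_1\in H^1(B'_R,|x'|^m dx')$ rules out every singular mode: the Dirichlet energy of $r^\alpha\phi_n$ with weight $|x'|^m$ scales as $\int_0^R r^{2\alpha+m+d-4}dr$, which diverges at $r=0$ precisely when $\alpha\leq-(d+m-3)/2$, excluding all $\alpha_n^-$ while retaining every $\alpha_n^+$. Hence $\bar v_1(r\xi)=\sum_{n\geq 0}a_n r^{\alpha_n^+}\phi_n(\xi)$. Since $\alpha_0^+=0$ and $\alpha_n^+>0$ for $n\geq 1$, continuity at the origin (provided by the Hölder bound below) gives $\bar v_1(0')=a_0$; because $\phi_0\equiv 1$ is $\kappa$-orthogonal to each $\phi_n$ with $n\geq 1$, the $\kappa$-weighted spherical average picks out only the zeroth mode, yielding $(\bar v_1)^\kappa_{\partial B'_\rho}=a_0=\bar v_1(0')$. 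Parseval's identity on $(\mathbb{S}^{d-2},\kappa\,dS)$ then produces
\begin{equation*}
\fint_{\partial B'_\rho}\kappa(\xi)\,|\bar v_1-\bar v_1(0')|^2\,dS=c_{d,\kappa}\sum_{n\geq 1}a_n^2\rho^{2\alpha_n^+},
\end{equation*}
and since $\{\alpha_n^+\}_{n\geq 1}$ is nondecreasing with minimum $\alpha(\lambda_1)$, factoring out $(\rho/R)^{2\alpha(\lambda_1)}$ and comparing with the same identity on $\partial B'_R$ delivers the advertised decay.

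For the Hölder continuity I split the ball at a scale $r_0$. On the annular region $\{r_0\leq|x'|\leq R\}$ the coefficient matrix $\kappa(\xi)|x'|^m I$ is smooth and uniformly elliptic, and the scale-invariant rescaling $y'=x'/r_0$ (which factors out $r_0^m$) brings it to a fixed class, so De Giorgi--Nash--Moser gives a uniform $C^{\beta_0}$ bound with $\beta_0=\beta_0(d,m,\|\ln\kappa\|_{L^\infty})$. Near the origin, iterating interior $L^2\to L^\infty$ estimates on dyadic annuli against the spherical-shell decay just proved upgrades the $L^2$ control to the pointwise bound $|\bar v_1(x')-\bar v_1(0')|\leq C|x'|^{\alpha(\lambda_1)}\|\bar v_1\|_{L^2}$; combining the two regimes yields the stated Hölder exponent $\beta=\min\{\beta_0,\alpha(\lambda_1)\}$. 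The main obstacle I anticipate is rigorously justifying the mode-by-mode expansion for an $H^1$-weak solution and the exclusion of the $\alpha_n^-$ modes; this is handled by the weighted energy computation above together with uniqueness for the Dirichlet problem for $\mathrm{div}(\kappa|x'|^m\nabla\cdot)=0$ within each spherical-harmonic sector.
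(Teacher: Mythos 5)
Your proof follows essentially the same separation-of-variables strategy as the paper's: rewrite the equation in polar coordinates, expand $\bar v_1$ in the $\kappa$-weighted eigenbasis of \eqref{eigen001}, reduce each mode to an Euler ODE with indicial roots $\alpha_n^{\pm}$, use the weighted-$H^1$ hypothesis to discard the singular branches $r^{\alpha_n^-}$, and then read off both $(\bar v_1)^\kappa_{\partial B_\rho'}=\bar v_1(0')$ and the decay estimate from Parseval (using that $\alpha_n^+\geq\alpha_1^+=\alpha(\lambda_1)$). The one genuine variation is your mechanism for excluding the singular modes: the paper checks that the \emph{weighted $L^2$ norm} $\int\kappa\bar v_1^2\,r^m\,dx'$ diverges (computing $\int_\varrho^1 r^{4-m-d}dr\to\infty$), whereas you compute the \emph{weighted Dirichlet energy} and observe $\int_0 r^{2\alpha+m+d-4}dr$ diverges for $\alpha\leq-(d+m-3)/2$; both are consequences of $\bar v_1\in H^1(B_R',|x'|^mdx')$ and both cover every $\alpha_n^-$, so the distinction is a matter of taste (the $L^2$ route is slightly more elementary since it avoids checking that the mode-wise gradient cross terms vanish). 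For the H\"older continuity the paper simply invokes Fabes--Kenig--Serapioni, while you sketch a direct dyadic-annulus Moser argument combined with the decay you just proved; this is a perfectly reasonable self-contained substitute and in fact makes the dependence of $\beta$ transparent, though one should note that your near-origin exponent $\alpha(\lambda_1)$ reduces to a quantity controlled by $(d,m,\|\ln\kappa\|_{L^\infty})$ only after lower-bounding $\lambda_1$ by the Rayleigh quotient, so the stated dependence of $\beta$ still holds.
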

\begin{proof}
To begin with, in view of Theorem 2.3.12 and Section 3 (see pp. 106) in \cite{FKS1982}, we derive that $\bar{v}_{1}\in C^{\beta}(B_{R}')$ for some $\beta=\beta(d,m,\|\ln\kappa\|_{L^{\infty}(\mathbb{S}^{d-2})})$. Without loss of generality, set $R=1$. Let $x'=(r,\xi)\in(0,1)\times\mathbb{S}^{d-2}$. Picking a test function $\phi(r)\psi(\xi)$ with $\phi(r)\in C^{\infty}_{c}((0,1))$ and $\psi(\xi)\in C^{\infty}(\mathbb{S}^{d-2})$, it follows from integration by parts that
\begin{align*}
&\int_{B_{1}'}\kappa(\xi)r^{m}\nabla\bar{v}_{1}\nabla(\phi\psi)\notag\\
&=\int^{1}_{0}\int_{\mathbb{S}^{d-2}}\kappa r^{m+d-2}\partial_{r}\bar{v}_{1}\phi'\psi+\kappa r^{m+d-4}\nabla_{\mathbb{S}^{d-2}}\bar{v}_{1}\nabla_{\mathbb{S}^{d-2}}\psi\phi\,d\xi dr\\
&=-\int_{B_{1}'}[\kappa r^{m}\partial_{rr}\bar{v}_{1}+\kappa(m+d-2)r^{m-1}\partial_{r}\bar{v}_{1}+r^{m-2}\mathrm{div}_{\mathbb{S}^{d-2}}(\kappa\nabla_{\mathbb{S}^{d-2}}\bar{v}_{1})]\phi\psi.
\end{align*}
Hence $\bar{v}_{1}$ satisfies
\begin{align}\label{ZKCT001}
\partial_{rr}\bar{v}_{1}+\frac{m+d-2}{r}\partial_{r}\bar{v}_{1}+\frac{1}{\kappa(\xi)r^{2}}\mathrm{div}_{\mathbb{S}^{d-2}}(\kappa(\xi)\nabla_{\mathbb{S}^{d-2}}\bar{v}_{1})=0,\quad\mathrm{in}\;B_{1}'\setminus\{0'\}.
\end{align}
Set $\lambda_{0}=0$ and $\{\lambda_{i}\}_{i=1}^{\infty}$ denotes all the positive eigenvalues of \eqref{eigen001}, satisfying that $\lambda_{i}<\lambda_{i+1}$, $i\in\{0\}\cup\mathbb{N}$. Pick a positive constant $Y_{0}$ such that $\langle Y_{0},Y_{0}\rangle_{\mathbb{S}^{d-2}}=1$. Denote by $Y_{k,l}$ the corresponding eigenfunction of $\lambda_{k}$, satisfying
\begin{align*}
-\mathrm{div}_{\mathbb{S}^{d-2}}(\kappa(\xi)\nabla_{\mathbb{S}^{d-2}}Y_{k,l})=\lambda_{k}\kappa(\xi)Y_{k,l},\quad\xi\in\mathbb{S}^{d-2}.
\end{align*}
Moreover, $\{Y_{0}\}\cup\{Y_{k,l}\}_{k,l}$ comprises an orthonormal basis of $L^{2}(\mathbb{S}^{d-2})$ under the inner product \eqref{inner001}.

Decompose $\bar{v}_{1}$ as follows:
\begin{align}\label{FK001}
\bar{v}_{1}(r,\xi)=V_{0}(r)Y_{0}+\sum^{\infty}_{k=1}\sum^{N(k)}_{l=1}V_{k,l}(r)Y_{k,l}(\xi),\quad (r,\xi)\in(0,1)\times\mathbb{S}^{d-2},
\end{align}
where $V_{0}(r)$, $V_{k,l}(r)\in C^{2}(0,1)$ are, respectively, determined by
\begin{align*}
V_{0}(r)=\fint_{\mathbb{S}^{d-2}}\kappa(\xi)\bar{v}_{1}(r,\xi)Y_{0}d\xi,\quad V_{k,l}(r)=\fint_{\mathbb{S}^{d-2}}\kappa(\xi)\bar{v}_{1}(r,\xi)Y_{k,l}(\xi)d\xi.
\end{align*}
By using the test functions $\kappa(\xi)Y_{0}$ and $\kappa(\xi)Y_{k,l}(\xi)$ for equation \eqref{ZKCT001} on $\mathbb{S}^{d-2}$, we deduce that
\begin{align*}
V_{0}''+\frac{m+d-2}{r}V_{0}'=0,\quad V_{k,l}''+\frac{m+d-2}{r}V_{k,l}'-\frac{\lambda_{k}}{r^{2}}V_{k,l}=0,\quad0<r<1.
\end{align*}
A direct calculation gives that $V_{0}=c_{1}+c_{2}r^{3-m-d}$ and $V_{k,l}=c_{3}r^{\alpha(\lambda_{k})_{-}}+c_{4}r^{\alpha(\lambda_{k})_{+}}$ for some constants $c_{i}$, $i=1,2,3,4,$ where
\begin{align*}
\alpha(\lambda_{k})_{\pm}:=\frac{-(m+d-3)\pm\sqrt{(m+d-3)^{2}+4\lambda_{k}}}{2}.
\end{align*}
Claim that $c_{2}=0$. Otherwise, if $c_{2}\neq0$, then for any $\varrho>0$,
\begin{align*}
\int_{B_{1}'\setminus B_{\varrho}'}\kappa(\xi)\bar{v}_{1}^{2}r^{m}dx'\geq&\frac{1}{C}\int_{B_{1}'\setminus B_{\varrho}'}\kappa(\xi)V_{0}(r)^{2}r^{m}dx'\notag\\
\geq&\frac{1}{C}\int^{1}_{\varrho}|c_{1}+c_{2}r^{3-m-d}|^{2}r^{m+d-2}dr\notag\\
\geq&\frac{1}{C}\begin{cases}
|\ln\varrho|,&m=2,\,d=3,\\
\varrho^{5-m-d},&\text{otherwise},
\end{cases}\rightarrow\infty,\quad\text{as $\varrho\rightarrow0$}.
\end{align*}
This contradicts the assumed condition that $\bar{v}_{1}\in H^{1}(B_{R}',|x'|^{m}dx')$. Therefore, $c_{2}=0$. By the same argument, we also obtain that $c_{3}=0$. Consequently, we have
\begin{align}\label{NCDZ01}
V_{0}(\rho)\equiv V_{0}(1),\quad V_{k,l}(\rho)=\rho^{\alpha(\lambda_{k})_{+}}V_{k,l}(1),\quad\rho\in(0,1).
\end{align}
This, together with \eqref{FK001}, shows that
\begin{align*}
\fint_{\partial B_{\rho}'}\kappa\Big(\frac{x'}{|x'|}\Big)|\bar{v}_{1}-\bar{v}_{1}(0')|^{2}=&\sum^{\infty}_{k=1}\sum^{N(k)}_{l=1}|V_{k,l}(\rho)|^{2}\notag\\
\leq&\rho^{2\alpha(\lambda_{1})_{+}}\sum^{\infty}_{k=1}\sum^{N(k)}_{l=1}|V_{k,l}(1)|^{2}\notag\\
=&\rho^{2\alpha(\lambda_{1})_{+}}\fint_{\partial B_{1}'}\kappa\Big(\frac{x'}{|x'|}\Big)|\bar{v}_{1}-\bar{v}_{1}(0')|^{2},
\end{align*}
where we used the fact that $\bar{v}(0')=V_{0}(0)Y_{0}$. Moreover, we have from \eqref{NCDZ01} that $\bar{v}(0')=V_{0}(\rho)Y_{0}=(\bar{v}_{1})^{\kappa}_{\partial B_{\rho}'}$ for any $0<\rho<1$. The proof is complete.

\end{proof}

For $\bar{v}_{2}$, we establish its $L^{\infty}$ estimate by applying the Moser iteration in the following.
\begin{lemma}\label{lemma002}
For $d\geq 3$, $1+\sigma>0$, let $\bar{v}_{2}\in H^{1}_{0}(B_{R}',|x'|^{m}dx')$ be a solution of \eqref{de002} with $\varepsilon=0$ and $R=1$. Assume that $F\in L^{\infty}(B_{1}')$ and $\|F\|_{\varepsilon,\sigma,0,B_{1}'}<\infty$.  Then,
\begin{align*}
\|\bar{v}_{2}\|_{L^{\infty}(B_{1}')}\leq C\|F\|_{\varepsilon,\sigma,0,B_{1}'},
\end{align*}
where $C$ is a positive constant depending only on $d,m,\sigma$ and $\kappa_{i}$, $i=0,1,...,d-1$, but not on $\varepsilon$.
\end{lemma}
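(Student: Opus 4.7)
The plan is a Moser iteration for the degenerate elliptic equation $\mathrm{div}(\delta\nabla u)=\mathrm{div}F$ in $B'_{1}$ with Dirichlet data, where $\delta(x')=\kappa(x'/|x'|)|x'|^{m}\sim|x'|^{m}$ (since $\|\ln\kappa\|_{L^{\infty}(\mathbb{S}^{d-2})}<\infty$), and where the pointwise bound $|F(x')|\le\|F\|_{*}|x'|^{\sigma+m}$ is available from $\varepsilon=0$, with $\|F\|_{*}:=\|F\|_{\varepsilon,\sigma,0,B'_{1}}$. Throughout, write $u:=\bar{v}_{2}$ and $d\mu:=|x'|^{m}\,dx'$.

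As a base $L^{2}$ bound I would test the weak formulation of \eqref{de002} against $\varphi=u\in H^{1}_{0}(B'_{1},d\mu)$ and absorb the $F$ term by Young's inequality, obtaining $\int\delta|\nabla u|^{2}\le\int\delta^{-1}|F|^{2}\le C\|F\|_{*}^{2}\int_{B'_{1}}|x'|^{m+2\sigma}\,dx'$. The last integral is finite precisely because $1+\sigma>0$ (together with $m\ge 2,\,d\ge 3$, keeping $m+2\sigma+d-1>0$), and a weighted Poincar\'e inequality on $H^{1}_{0}(B'_{1},d\mu)$ then gives $\|u\|_{L^{2}(d\mu)}\le C\|F\|_{*}$.

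For the iteration step I would test with $\varphi=u|u|^{q-2}$ for $q>2$, after the usual truncation $u\mapsto\min(|u|,N)\,\mathrm{sgn}(u)$ and a limit $N\to\infty$ to keep $\varphi$ admissible. Setting $w:=|u|^{q/2}$, the identity $\nabla\varphi=(q-1)|u|^{q-2}\nabla u$ combined with Young's inequality yields the Caccioppoli-type bound
\begin{align*}
\int|x'|^{m}|\nabla w|^{2}\,dx'\le Cq^{2}\|F\|_{*}^{2}\int|x'|^{m+2\sigma}w^{2-4/q}\,dx'.
\end{align*}
I would then apply a weighted Sobolev--Poincar\'e inequality $\bigl(\int w^{2\chi}\,d\mu\bigr)^{1/\chi}\le C\int|\nabla w|^{2}\,d\mu$ with $\chi=(d+m-1)/(d+m-3)>1$, i.e.\ the exponent dictated by the ``effective dimension'' $d-1+m$ associated with the weight $|x'|^{m}$; this can be derived either from the Caffarelli--Kohn--Nirenberg family or by lifting the radial/angular decomposition of $w$ to the unweighted Sobolev inequality in $\mathbb{R}^{d-1+m}$. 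The spurious factor $|x'|^{2\sigma}$ on the right is split off by H\"older with conjugate exponent $p'$ satisfying $(2-4/q)p=2\chi$, and one checks that $p'\to(d+m-1)/2$ as $q\to\infty$, so the integrability $\int|x'|^{m+2\sigma p'}\,dx'<\infty$ degenerates in the limit to $(d+m-1)(1+\sigma)>0$, which is exactly the hypothesis. The outcome is a clean recursion
\begin{align*}
\|u\|_{L^{q\chi}(d\mu)}^{q}\le Cq^{2}\|F\|_{*}^{2}\|u\|_{L^{q-2}(d\mu)}^{q-2}.
\end{align*}

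Iterating along a geometric sequence $\{q_{k}\}$ with $q_{k+1}-2=\chi q_{k}$ and summing the logarithmic error terms (which converge since $\sum 1/q_{k}<\infty$) produces $\|u\|_{L^{\infty}(B'_{1})}\le C\|F\|_{*}^{\alpha}\|u\|_{L^{2}(d\mu)}^{1-\alpha}$ for some $\alpha\in(0,1)$, and combining with the base-step $L^{2}(d\mu)$ bound delivers the advertised linear estimate $\|u\|_{L^{\infty}(B'_{1})}\le C\|F\|_{*}$ with the correct constant dependence. The main obstacle I foresee is the weighted Sobolev inequality itself: since the weight $|x'|^{m}$ fails to lie in $A_{2}$ as soon as $m\ge d-1$, Fabes--Kenig--Serapioni theory is not applicable off the shelf, and one must either invoke CKN directly or carry out the spherical-harmonic lift to dimension $d-1+m$. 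A secondary delicate point is tracking the two exponents through the H\"older splitting so that they sum to $1$ at every step of the recursion (as they must by the scaling $F\mapsto\lambda F\Rightarrow u\mapsto\lambda u$), guaranteeing genuine linearity of the final bound in $\|F\|_{*}$.
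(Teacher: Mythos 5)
Your overall plan---Moser iteration in the weighted space $H^{1}_{0}(B'_{1},|x'|^{m}dx')$ driven by the Caffarelli--Kohn--Nirenberg inequality with effective dimension $d+m-1$---is precisely the paper's, and your Caccioppoli step and base $L^{2}$ bound are correct. The gap is in the H\"older splitting. With the choice $(2-4/q)p=2\chi$ you propose, the factor $\|w^{2-4/q}\|_{L^{p}(d\mu)}$ equals $\|w\|_{L^{2\chi}(d\mu)}^{2-4/q}$, the \emph{same} norm that CKN produces (squared) on the left-hand side. After cancellation one obtains only the direct bound $\|u\|_{L^{q\chi}(d\mu)}\leq Cq\,\|F\|_{\varepsilon,\sigma,0,B'_{1}}$, whose constant grows linearly in $q$ and gives no $L^{\infty}$ information. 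Moreover the ``clean recursion'' $\|u\|_{L^{q\chi}}^{q}\leq Cq^{2}\|u\|_{L^{q-2}}^{q-2}$ you write down does not follow from that H\"older choice; it is what comes from the degenerate split $p=1$, $p'=\infty$, which needs $|x'|^{2\sigma}\in L^{\infty}$, hence $\sigma\geq 0$ rather than the stated hypothesis $\sigma>-1$.

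The repair, and what the paper actually does, is to take the $w$-side H\"older exponent strictly \emph{sub-critical}: fix a small $\mu>0$ and use $\chi_{\mu}:=\frac{d+m-1+2\mu}{d+m-3+2\mu}<\chi$, so that, after an additional finite-measure H\"older to adjust the right-hand index, one obtains the genuine recursion $\|\bar{v}_{2}\|_{L^{\chi p}(d\mu)}^{p}\leq Cp^{2}\|\bar{v}_{2}\|_{L^{\chi_{\mu}p}(d\mu)}^{p-2}$, with multiplicative gain $\chi/\chi_{\mu}>1$ per step. Strictness of $1+\sigma>0$ is exactly what leaves room to choose $\mu$ small enough that $\int_{B'_{1}}|x'|^{\sigma(d+m-1+2\mu)+m}\,dx'<\infty$. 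Once this recursion actually closes, your iteration and summation scheme works as you described, so the fix is local but essential: as written, the iteration never starts.
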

\begin{proof}
Without loss of generality, suppose that $\|F\|_{\varepsilon,\gamma,\sigma,B_{1}'}=1$.  Multiplying \eqref{de002} by $-|\bar{v}_{2}|^{p-2}\bar{v}_{2}$ with $p\geq2$ and integrating by parts, we derive
\begin{align*}
&(p-1)\int_{B_{1}'}\kappa\Big(\frac{x'}{|x'|}\Big)|x'|^{m}|\nabla\bar{v}_{2}|^{2}|\bar{v}_{2}|^{p-2}= (p-1)\int_{B'_{1}}F\cdot\nabla\bar{v}_{2}|\bar{v}_{2}|^{p-2}.
\end{align*}
Recall the following discrete version of H\"{o}lder's inequality
\begin{align}\label{INEQ001}
\left(\sum^{N}_{i=1}a_{i}b_{i}\right)^{2}\leq\left(\sum^{N}_{i=1}a_{i}^{2}\right)\left(\sum^{N}_{i=1}b_{i}^{2}\right),\quad a_{i},b_{i}\in\mathbb{R},\;i=1,...,N,\;N\geq1,
\end{align}
and the elemental inequality
\begin{align}\label{INEQ009}
a^{p}+b^{p}\leq(a+b)^{p}\leq2^{p-1}(a^{p}+b^{p}),\quad a,b>0,\;p\geq1.
\end{align}
For simplicity, denote $\xi=\frac{x'}{|x'|}\in\mathbb{S}^{d-2}$. A combination of \eqref{INEQ001}--\eqref{INEQ009} shows that
\begin{align}\label{MAME001}
\kappa(\xi)\leq&\kappa_{0}\bigg(\sum_{i\in\mathcal{A}}\kappa_{i}^{2}\bigg)^{\frac{m}{4}}\bigg(\sum_{i\in\mathcal{A}}|\xi_{i}|^{4}\bigg)^{\frac{m}{4}}+\bigg(\sum_{j\in\mathcal{B}}\kappa_{j}^{2}\bigg)^{\frac{1}{2}}\bigg(\sum_{j\in\mathcal{B}}|\xi_{j}|^{2m}\bigg)^{\frac{1}{2}}\notag\\
\leq&\kappa_{0}\bigg(\sum_{i\in\mathcal{A}}\kappa_{i}^{2}\bigg)^{\frac{m}{4}}\bigg(\sum_{i\in\mathcal{A}}|\xi_{i}|^{2}\bigg)^{\frac{m}{2}}+\bigg(\sum_{j\in\mathcal{B}}\kappa_{j}^{2}\bigg)^{\frac{1}{2}}\bigg(\sum_{j\in\mathcal{B}}|\xi_{j}|^{2}\bigg)^{\frac{m}{2}}\notag\\
\leq&\Bigg[\kappa_{0}^{2}\bigg(\sum_{i\in\mathcal{A}}\kappa_{i}^{2}\bigg)^{\frac{m}{2}}+\sum_{j\in\mathcal{B}}\kappa_{j}^{2}\Bigg]^{\frac{1}{2}}\Bigg[\bigg(\sum_{i\in\mathcal{A}}|\xi_{i}|^{2}\bigg)^{m}+\bigg(\sum_{j\in\mathcal{B}}|\xi_{j}|^{2}\bigg)^{m}\Bigg]^{\frac{1}{2}}\notag\\
\leq&\Bigg[\kappa_{0}^{2}\bigg(\sum_{i\in\mathcal{A}}\kappa_{i}^{2}\bigg)^{\frac{m}{2}}+\sum_{j\in\mathcal{B}}\kappa_{j}^{2}\Bigg]^{\frac{1}{2}},
\end{align}
and
\begin{align}\label{MAME002}
\kappa(\xi)\geq&\kappa_{0}\min_{i\in\mathcal{A}}\kappa_{i}^{\frac{m}{2}}\bigg(\sum_{i\in\mathcal{A}}|\xi_{i}|^{2}\bigg)^{\frac{m}{2}}+\min_{j\in\mathcal{B}}\kappa_{j}\sum_{j\in\mathcal{B}}|\xi_{j}|^{m}\notag\\
\geq&\min\Big\{\kappa_{0}\min_{i\in\mathcal{A}}\kappa_{i}^{\frac{m}{2}},\min_{j\in\mathcal{B}}\kappa_{j}\Big\}\Bigg[\bigg(\sum_{i\in\mathcal{A}}|\xi_{i}|^{2}\bigg)^{\frac{m}{2}}+\sum_{j\in\mathcal{B}}|\xi_{j}|^{m}\Bigg]\notag\\
\geq&2^{-\frac{m-2}{2}}\min\big\{2^{-\frac{(m-2)(\mathfrak{b}-1)}{2}},1\big\}\min\Big\{\kappa_{0}\min_{i\in\mathcal{A}}\kappa_{i}^{\frac{m}{2}},\min_{j\in\mathcal{B}}\kappa_{j}\Big\},
\end{align}
where in the last line we used the fact that
\begin{align*}
&\bigg(\sum_{i\in\mathcal{A}}|\xi_{i}|^{2}\bigg)^{\frac{m}{2}}+\sum_{j\in\mathcal{B}}|\xi_{j}|^{m}\notag\\
&\geq\min\big\{2^{-\frac{(m-2)(\mathfrak{b}-1)}{2}},1\big\}\Bigg[\bigg(\sum_{i\in\mathcal{A}}|\xi_{i}|^{2}\bigg)^{\frac{m}{2}}+\bigg(\sum_{j\in\mathcal{B}}|\xi_{j}|^{2}\bigg)^{\frac{m}{2}}\Bigg]\notag\\
&\geq2^{-\frac{m-2}{2}}\min\big\{2^{-\frac{(m-2)(\mathfrak{b}-1)}{2}},1\big\},
\end{align*}
with $\mathfrak{b}:=\mathrm{card}(\mathcal{B})$ representing the number of elements in set $\mathcal{B}$. Combining \eqref{MAME001} and \eqref{MAME002}, we obtain
\begin{align}\label{DZM001}
\varrho:=\theta_{3}^{-m/(m-1)}\leq\kappa\Big(\frac{x'}{|x'|}\Big)\leq\theta_{1},
\end{align}
where $\theta_{1}$ and $\theta_{3}$ are defined by \eqref{CONSTANT001} and \eqref{CONSTANT003}, respectively. Note that $|F|\leq|x'|^{\sigma+m}$ in $B_{1}'$. In light of $1+\sigma>0$, we then deduce from Young's inequality and H\"{o}lder's inequality that
\begin{align*}
&\left|\int_{B'_{1}}F\cdot\nabla\bar{v}_{2}|\bar{v}_{2}|^{p-2}\right|\notag\\
&\leq\frac{\varrho}{2}\int_{B_{1}'}|x'|^{m}|\nabla\bar{v}_{2}|^{2}|\bar{v}_{2}|^{2}+C\int_{B_{1}'}|x'|^{2\sigma+m}|\bar{v}_{2}|^{p-2}\notag\\
&\leq\frac{\varrho}{2}\int_{B_{1}'}|x'|^{m}|\nabla\bar{v}_{2}|^{2}|\bar{v}_{2}|^{2}\notag\\
&\quad+C\||\bar{v}_{2}|^{p-2}\|_{L^{\frac{d+m-1+2\mu}{d+m-3+2\mu}}(B_{1}',|x'|^{m}dx')}\left(\int_{B_{1}'}|x'|^{\sigma(d+m-1+2\mu)+m}\right)^{\frac{2}{d+m-1+2\mu}}\notag\\
&\leq\frac{\varrho}{2}\int_{B_{1}'}|x'|^{m}|\nabla\bar{v}_{2}|^{2}|\bar{v}_{2}|^{2}+C\||\bar{v}_{2}|^{p-2}\|_{L^{\frac{d+m-1+2\mu}{d+m-3+2\mu}}(B_{1}',|x'|^{m}dx')},
\end{align*}
where $\mu$ is chosen sufficiently small to ensure that
\begin{align*}
\left(\int_{B_{1}'}|x'|^{\sigma(d+m-1+2\mu)+m}\right)^{\frac{2}{d+m-1+2\mu}}<\infty.
\end{align*}
Consequently, combining these above facts, we deduce
\begin{align}\label{ZFMZ00A}
\frac{4}{p^{2}}\int_{B_{1}'}|x'|^{m}\big|\nabla|\bar{v}_{2}|^{\frac{p}{2}}\big|^{2}=&\int_{B_{1}'}|x'|^{m}|\nabla\bar{v}_{2}|^{2}|\bar{v}_{2}|^{p-2}\notag\\
\leq&C\||\bar{v}_{2}|^{p-2}\|_{L^{\frac{d+m-1+2\mu}{d+m-3+2\mu}}(B_{1}',|x'|^{m}dx')}.
\end{align}
We will utilize the Caffarelli-Kohn-Nirenberg inequality in \cite{CKN1984} having the following form:
\begin{align}\label{CKN00A}
\|u\|_{L^{\frac{2(d+m-1)}{d+m-3}}(B_{1}',|x'|^{m}dx')}\leq C\|\nabla u\|_{L^{2}(B_{1}',|x'|^{m}dx')},\quad\forall u\in H_{0}^{1}(B_{1}',|x'|^{m}dx').
\end{align}
Picking $p=2$ in \eqref{ZFMZ00A} and applying \eqref{CKN00A} with $u=|\bar{v}_{2}|$, we have from H\"{o}lder's inequality that
\begin{align}\label{FDTZ001}
\|\bar{v}_{2}\|_{L^{\frac{2(d+m-1+2\mu)}{d+m-3+2\mu}}(B_{1}',|x'|^{m}dx')}\leq C.
\end{align}
For $p\geq2$, applying \eqref{CKN00A} with $u=|\bar{v}_{2}|^{\frac{p}{2}}$ and using H\"{o}lder's inequality again, we obtain from \eqref{ZFMZ00A} that
\begin{align*}
\|\bar{v}_{2}\|^{p}_{L^{\frac{(d+m-1)p}{d+m-3}}(B_{1}',|x'|^{m}dx')}\leq&C\|\nabla|\bar{v}_{2}|^{\frac{p}{2}}\|^{2}_{L^{2}(B_{1}',|x'|^{m}dx')}\notag\\
\leq& Cp^{2}\|\bar{v}_{2}\|^{p-2}_{L^{\frac{(d+m-1+2\mu)(p-2)}{d+m-3+2\mu}}(B_{1}',|x'|^{m}dx')}\notag\\
\leq& Cp^{2}\|\bar{v}_{2}\|^{p-2}_{L^{\frac{(d+m-1+2\mu)p}{d+m-3+2\mu}}(B_{1}',|x'|^{m}dx')},
\end{align*}
which, in combination with Young's inequality, reads that
\begin{align*}
\|\bar{v}_{2}\|_{L^{\frac{(d+m-1)p}{d+m-3}}(B_{1}',|x'|^{m}dx')}\leq&(Cp^{2})^{1/p}\left(\|\bar{v}_{2}\|_{L^{\frac{(d+m-1+2\mu)p}{d+m-3+2\mu}}(B_{1}',|x'|^{m}dx')}+\frac{2}{p}\right).
\end{align*}
Denote
\begin{align*}
p_{k}=\frac{2(d+m-1+2\mu)}{d+m-3+2\mu}\left(\frac{d+m-1}{d+m-3}\cdot\frac{d+m-3+2\mu}{d+m-1+2\mu}\right)^{k},\quad k\geq0.
\end{align*}
After $k$ iterations, it follows from \eqref{FDTZ001} that
\begin{align*}
\|\bar{v}_{2}\|_{L^{p_{k}}(B_{1}',|x'|^{m}dx')}\leq&\prod^{k-1}_{i=0}(Cp_{i}^{2})^{1/p_{i}}\|\bar{v}_{2}\|_{L^{p_{0}}(B_{1}',|x'|^{m}dx')}\notag\\
&+\sum^{k-1}_{i=0}\prod^{k-1-i}_{j=0}(Cp^{2}_{k-1-j})^{1/p_{k-1-j}}\frac{2}{p_{i}}\notag\\
\leq&C\|\bar{v}_{2}\|_{L^{\frac{2(d+m-1+2\mu)}{d+m-3+2\mu}}(B_{1}',|x'|^{m}dx')}+C\sum^{k-1}_{i=0}\frac{1}{p_{i}}\leq C,
\end{align*}
where $C=C(d,m,\sigma)$. Sending $k\rightarrow\infty$, we complete the proof of Lemma \ref{lemma002}.

\end{proof}
In order to prove Proposition \ref{prop003}, we also need the following lemma.
\begin{lemma}\label{lemmaC01}
Let $\bar{w}\in H^{1}_{0}(B_{1}',|x'|^{m+\beta}dx')$ with $d\geq3,\,m\geq2$ and $\beta<1$. Then we have
\begin{align*}
\sup\limits_{0<r<1}r^{d+m-2}\fint_{\partial B_{r}'}|\bar{w}|^{2}\leq C\int_{B_{1}'}|x'|^{m+\beta}|\nabla\bar{w}|^{2},
\end{align*}
where $C=C(d,m,\beta).$
\end{lemma}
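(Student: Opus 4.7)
This is a weighted Hardy/trace inequality on $B_1' \subset \mathbb{R}^{d-1}$. The approach is to pass to polar coordinates, reduce to smooth functions by density in $H^1_0(B_1',|x'|^{m+\beta}dx')$, and exploit the vanishing boundary datum at $r=1$ via the fundamental theorem of calculus along radial rays, controlling the result with a weighted Cauchy--Schwarz.

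First I would work in polar coordinates $x'=s\xi$ with $s=|x'|\in(0,1)$ and $\xi\in\mathbb{S}^{d-2}$, so that $dx'=s^{d-2}\,ds\,d\xi$ and
\begin{equation*}
\int_{B_1'}|x'|^{m+\beta}|\nabla\bar{w}|^2\,dx'\;\geq\;\int_0^1\!\!\int_{\mathbb{S}^{d-2}} s^{m+\beta+d-2}\,|\partial_s\bar{w}|^2\,d\xi\,ds.
\end{equation*}
Using $\bar{w}(1,\xi)=0$, write $\bar{w}(r,\xi)=-\int_r^1\partial_s\bar{w}(s,\xi)\,ds$ and apply the weighted Cauchy--Schwarz inequality with the conjugate pair of weights $s^{-(m+\beta+d-2)}$ and $s^{m+\beta+d-2}$, chosen precisely so that the second factor matches the radial density of $|x'|^{m+\beta}dx'$:
\begin{equation*}
|\bar{w}(r,\xi)|^2\;\leq\;\left(\int_r^1 s^{-(m+\beta+d-2)}\,ds\right)\!\left(\int_r^1 s^{m+\beta+d-2}|\partial_s\bar{w}(s,\xi)|^2\,ds\right).
\end{equation*}

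Next, in the generic case $m+\beta+d-3>0$ the first factor is bounded by $Cr^{3-m-\beta-d}$. Integrating in $\xi\in\mathbb{S}^{d-2}$ and recognizing the inner integral as at most $\int_{B_1'}|x'|^{m+\beta}|\nabla\bar{w}|^2\,dx'$, together with $\fint_{\partial B_r'}|\bar{w}|^2=|\mathbb{S}^{d-2}|^{-1}\!\int_{\mathbb{S}^{d-2}}|\bar{w}(r,\xi)|^2\,d\xi$, yields
\begin{equation*}
r^{d+m-2}\fint_{\partial B_r'}|\bar{w}|^2\;\leq\;C\,r^{1-\beta}\int_{B_1'}|x'|^{m+\beta}|\nabla\bar{w}|^2\,dx'.
\end{equation*}
The hypothesis $\beta<1$ makes $r^{1-\beta}\leq 1$ uniformly for $r\in(0,1)$, and taking the supremum in $r$ delivers the claimed bound.

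The argument is a direct computation once the radial weight is correctly identified, so there is not really a hard step; the one point requiring care is the choice of exponent $m+\beta+d-2$ in the Cauchy--Schwarz, dictated by matching $|x'|^{m+\beta}$ against the polar volume element. Borderline exponents ($m+\beta+d-3\leq 0$) are handled either by a logarithmic factor harmlessly absorbed by $r^{d+m-2}$, or by noting that then the first factor is bounded independently of $r$, so no separate treatment is substantive.
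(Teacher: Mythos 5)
Your proof is correct and follows the same basic scheme as the paper's (pass to polar coordinates, write $\bar{w}(r,\xi)=-\int_r^1\partial_s\bar{w}\,ds$, apply weighted Cauchy--Schwarz). The one genuine difference is the choice of conjugate weights. You split off the full radial density $s^{\pm(m+\beta+d-2)}$, which makes the first factor $\int_r^1 s^{-(m+\beta+d-2)}\,ds$ behave like $r^{3-m-\beta-d}$ and forces you to cancel this against the prefactor $r^{d+m-2}$, leaving $r^{1-\beta}\le 1$; in turn you need a (correctly dismissed, but still present) case split at $m+\beta+d-3\le 0$. The paper instead splits off only $s^{\pm\beta}$: since $\beta<1$, the factor $\int_r^1 s^{-\beta}\,ds\le\frac{1}{1-\beta}$ is uniformly bounded, and the remaining $r^{d+m-2}$ is absorbed under the integral sign via $r^{d+m-2}\le s^{d+m-2}$ for $s\ge r$. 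This choice of weight eliminates the exponent case analysis entirely and makes the hypothesis $\beta<1$ play its role transparently, at the single point where one needs $\int_0^1 s^{-\beta}\,ds<\infty$. Both arguments are a few lines and yield the same constant up to the choice of normalization; the paper's is marginally cleaner, but nothing in your argument is wrong.
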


\begin{proof}
By denseness, it suffices to consider $\bar{u}\in C^{1}(B_{1}')$. Utilizing H\"{o}lder's inequality and the Fubini theorem, we obtain
\begin{align*}
r^{m}\int_{\partial B_{r}'}|\bar{w}|^{2}dx'=&\int_{\mathbb{S}^{d-2}}r^{d+m-2}|\bar{w}(r,\xi)|^{2}d\xi\notag\\
=&\int_{\mathbb{S}^{d-2}}r^{d+m-2}\left(\int^{1}_{r}\partial_{s}\bar{w}(s,\xi)ds\right)^{2}d\xi\notag\\
\leq&\int_{\mathbb{S}^{d-2}}r^{d+m-2}\left(\int^{1}_{r}|\partial_{s}\bar{w}(s,\xi)|^{2}s^{\beta}ds\right)\left(\int^{1}_{r}s^{-\beta}ds\right)d\xi\notag\\
\leq&C\int_{0}^{1}\int_{\mathbb{S}^{d-2}}s^{d+m-2+\beta}|\partial_{s}\bar{w}(s,\xi)|^{2}dsd\xi\leq C\int_{B_{1}'}|x'|^{m+\beta}|\nabla\bar{w}|^{2}dx'.
\end{align*}
The proof is complete.
\end{proof}

We are now ready to use Lemmas \ref{lemma001}, \ref{lemma002} and \ref{lemmaC01} to give the proofs of Propositions \ref{prop001} and \ref{prop003}.
\begin{proof}[Proof of Propositions \ref{prop001} and \ref{prop003}]
We divide into two parts to complete the proofs.

{\bf Part 1.} Consider the case when $\varepsilon=0$. Let $\bar{v}(0')=0$ and $\|F\|_{0,\sigma,0,B_{R_{0}}'}=1$ without loss of generality. For $0<\rho\leq R\leq R_{0}$, write
\begin{align*}
\omega(\rho):=\bigg(\fint_{\partial B_{\rho}'}\kappa\Big(\frac{x'}{|x'|}\Big)|\bar{v}|^{2}\bigg)^{\frac{1}{2}}.
\end{align*}
Denote $\tilde{v}_{2}(x'):=\bar{v}_{2}(Rx')$ and $\tilde{F}(y'):=R^{-(m-1)}F(Ry')$. From \eqref{de002}, we obtain that $\tilde{v}_{2}$ solves
\begin{align*}
\mathrm{div}\Big[\kappa\Big(\frac{x'}{|x'|}\Big)|x'|^{m}\nabla\tilde{v}_{2}\Big]=\mathrm{div}\tilde{F},\quad\mathrm{in}\;B_{1}',
\end{align*}
where $\|\tilde{F}\|_{0,\sigma,0,B_{1}'}=R^{1+\sigma}\|F\|_{0,\sigma,\gamma,B_{R}'}$. Applying Lemma \ref{lemma002} to $\tilde{v}_{2}$, we have
\begin{align*}
\|\bar{v}_{2}\|_{L^{\infty}(B_{R}')}\leq CR^{1+\sigma}.
\end{align*}
This, together with \eqref{ADAD001} and Lemma \ref{lemma001}, shows that
\begin{align}\label{GAZ001}
\omega(\rho)\leq&\bigg(\fint_{\partial B_{\rho}'}\kappa\Big(\frac{x'}{|x'|}\Big)|\bar{v}_{1}-\bar{v}_{1}(0')|^{2}\bigg)^{\frac{1}{2}}+\bigg(\fint_{\partial B_{\rho}'}\kappa\Big(\frac{x'}{|x'|}\Big)|\bar{v}_{2}-\bar{v}_{2}(0')|^{2}\bigg)^{\frac{1}{2}}\notag\\
\leq&\left(\frac{\rho}{R}\right)^{\alpha(\lambda_{1})}\bigg(\fint_{\partial B_{R}'}\kappa\Big(\frac{x'}{|x'|}\Big)|\bar{v}_{1}|^{2}\bigg)^{\frac{1}{2}}+\left(\frac{\rho}{R}\right)^{\alpha}|\bar{v}_{1}(0')|+2\|\bar{v}_{2}\|_{L^{\infty}(B_{R}')}\notag\\
\leq&\left(\frac{\rho}{R}\right)^{\alpha(\lambda_{1})}\omega(R)+CR^{1+\sigma},
\end{align}
where the facts that $\bar{v}=\bar{v}_{1}$ on $\partial B_{R}'$ and $|\bar{v}_{1}(0')|=|\bar{v}_{2}(0')|$ were utilized. Pick $\rho=2^{-i-1}R_{0}$ and $R=2^{-i}R_{0}$ in \eqref{GAZ001} with $i=0,...,k-1,$ $k$ is a positive integer. After $k$ iterations and in light of $1+\sigma\neq\alpha(\lambda_{1})$, we obtain
\begin{align*}
\omega(2^{-k}R_{0})\leq&2^{-k\alpha(\lambda_{1})}\omega(R_{0})+C\sum^{k}_{i=1}2^{-(k-i)\alpha(\lambda_{1})}(2^{1-i}R_{0})^{1+\sigma}\notag\\
\leq&2^{-k\alpha(\lambda_{1})}\omega(R_{0})+C2^{-k\alpha(\lambda_{1})}R_{0}^{1+\sigma}\frac{1-2^{k(\alpha(\lambda_{1})-1-\sigma)}}{1-2^{\alpha(\lambda_{1})-1-\sigma}}\notag\\
\leq&2^{-k\tilde{\alpha}(\lambda_{1})}\big(\omega(R_{0})+CR_{0}^{1+\sigma}\big),
\end{align*}
where $\tilde{\alpha}(\lambda_{1})$ is given by \eqref{alpha001}. Observe that for any $\rho\in(0,R_{0})$, there exists some integer $k$ such that $\rho\in(2^{-k-1}R_{0},2^{-k}R_{0}]$. Then we derive
\begin{align*}
\omega(\rho)\leq C\rho^{\tilde{\alpha}(\lambda_{1})},\quad\mathrm{for}\;\mathrm{any}\;\rho\in(0,R_{0}).
\end{align*}
which, together with \eqref{DZM001}, yields that Proposition \ref{prop001} holds.

{\bf Part 2.} Consider the case when $\varepsilon>0$. Without loss of generality, assume that $\bar{v}(0')=0$. For simplicity, denote $\alpha:=\alpha(\lambda_{1})$ and $\beta:=\beta(\lambda_{1})$, which are, respectively, given by \eqref{degree} and \eqref{beta001}. To begin with, from the mean value formula, we know
\begin{align}\label{IZ001}
|\bar{v}(x')|\leq|x'|^{1-\tau}\|\nabla\bar{v}\|_{\varepsilon,-\tau,1,B_{R_{0}}'},\quad\text{in }B_{R_{0}}'.
\end{align}
For any $0<R<R_{0}$, denote $\bar{v}_{2}:=\bar{v}-\bar{v}_{1}$, where $\bar{v}\in H^{1}(B_{R}')$ solves equation \eqref{KTMZ001} with $\varepsilon>0$ and $\bar{v}_{1}\in H^{1}(B_{R}',|x'|^{m}dx')$ satisfies equation \eqref{de001} with $\varepsilon=0$. From Lemma \ref{lemma001} and \eqref{IZ001}, we obtain
\begin{align*}
\left(\fint_{\partial B_{\rho}'}|\bar{v}_{1}-\bar{v}_{1}(0')|^{2}\right)^{\frac{1}{2}}\leq C\Big(\frac{\rho}{R}\Big)^{\alpha}R^{1-t}\|\nabla\bar{v}\|_{\varepsilon,-\tau,1,B_{R_{0}}'},\quad0<\rho<R.
\end{align*}
For $0<\rho<R$, applying the interior estimate in $B_{\rho}\setminus\overline{B}_{\rho/2}$, we derive
\begin{align}\label{PZWZA001}
|\nabla\bar{v}_{1}(x')|\leq C|x'|^{\alpha-1}R^{1-t-\alpha}\|\nabla\bar{v}\|_{\varepsilon,-\tau,1,B'_{R_{0}}},\quad\mathrm{in}\;B_{R/2}'\setminus\{0'\}.
\end{align}
Combining Lemma \ref{lemma001}, \eqref{IZ001} and the maximum principle, we have
\begin{align}\label{PZWZA002}
\|\bar{v}_{1}\|_{L^{\infty}(B_{R}')}=\sup\limits_{x'\in\partial B_{R}'\cup\{0'\}}|\bar{v}_{1}(x')|\leq CR^{1-\tau}\|\nabla\bar{v}\|_{\varepsilon,-\tau,1,B_{R_{0}}'}.
\end{align}
It then follows from the boundary estimate that
\begin{align}\label{PZWZA003}
|\nabla\bar{v}_{1}(x')|\leq CR^{-\tau}\|\nabla\bar{v}\|_{\varepsilon,-\tau,1,B_{R_{0}}'},\quad\mathrm{in}\;B'_{R}\setminus B'_{R/2}.
\end{align}
From \eqref{PZWZA001}--\eqref{PZWZA003}, we see that $\bar{v}_{1}\in H^{1}(B_{R}')$. Hence $\bar{v}_{2}\in H^{1}_{0}(B_{R}')$ verifies
\begin{align*}
\mathrm{div}\left[\Big(\varepsilon+\kappa\Big(\frac{x'}{|x'|}\Big)|x'|^{m}\Big)\nabla\bar{v}_{2}\right]=\mathrm{div}F-\varepsilon\Delta\bar{v}_{1},\quad\mathrm{for}\;x'\in B_{R}'.
\end{align*}
Denote $\tilde{v}_{i}(y')=\bar{v}_{i}(Ry')$, $i=1,2,$ $\tilde{F}(y')=R^{-(m-1)}F(Ry')$ and $\tilde{\varepsilon}=\varepsilon R^{-m}$. Then we have
\begin{align}\label{EAZG001}
\|\tilde{F}\|_{\tilde{\varepsilon},\sigma,0,B_{1}'}=R^{1+\sigma}\|F\|_{\varepsilon,\sigma,0,B_{R}'},\quad\|\nabla\tilde{v}_{1}\|_{\tilde{\varepsilon},\alpha-1,1,B_{1}'}=R^{\alpha}\|\nabla\bar{v}_{1}\|_{\varepsilon,\alpha-1,1,B_{R}'},
\end{align}
and
\begin{align}\label{ENZM001}
\mathrm{div}\left[\Big(\tilde{\varepsilon}+\kappa\Big(\frac{x'}{|x'|}\Big)|x'|^{m}\Big)\nabla\tilde{v}_{2}\right]=\mathrm{div}\tilde{F}-\tilde{\varepsilon}\Delta\tilde{v}_{1},\quad\mathrm{for}\;x'\in B_{1}'.
\end{align}
From \eqref{PZWZA001} and \eqref{PZWZA003}--\eqref{EAZG001}, we obtain
\begin{align}\label{FAZ001}
\|\nabla\tilde{v}_{1}\|_{\tilde{\varepsilon},\alpha-1,1,B_{1}'}\leq CR^{1-\tau}\|\nabla\bar{v}\|_{\varepsilon.-\tau,1,B_{R_{0}}'}.
\end{align}
Set $x'=(r,\xi)\in(0,1)\times\mathbb{S}^{d-2}$. Then multiplying \eqref{ENZM001} by $\tilde{v}_{2}$, we have from integration by parts that
\begin{align*}
\int_{B_{1}'}(\tilde{\varepsilon}+\kappa(\xi)r^{m})|\nabla\tilde{v}_{2}|^{2}=\int_{B_{1}'}\tilde{F}\cdot\nabla\tilde{v}_{2}-\tilde{\varepsilon}\int_{B_{1}'}\nabla\tilde{v}_{1}\cdot\nabla\tilde{v}_{2}.
\end{align*}
For simplicity, denote $\|\tilde{F}\|=\|\tilde{F}\|_{\varepsilon,\sigma,0,B_{1}'}$ and $\|\nabla\tilde{v}_{1}\|=\|\nabla\tilde{v}_{1}\|_{\tilde{\varepsilon},\alpha-1,1,B_{1}'}$. In view of \eqref{DZM001} and $2\sigma+n-2>-1$, it follows from Young's inequality and Lemma \ref{lemmaC01} that
\begin{align*}
\sup\limits_{0<r<1}r^{d+m-2}\fint_{\partial B_{r}'}|\tilde{v}_{2}|^{2}\leq&\int_{B_{1}'}(\tilde{\varepsilon}+r^{m})|\nabla\tilde{v}_{2}|^{2}\notag\\
\leq&C\|\tilde{F}\|^{2}\int_{B_{1}'}r^{2\sigma}(\tilde{\varepsilon}+r^{m})+C\|\nabla\tilde{v}_{1}\|^{2}\int_{B_{1}'}\frac{\tilde{\varepsilon}^{2}r^{2\alpha-2}}{\tilde{\varepsilon}+r^{m}}\notag\\
\leq&C\|\tilde{F}\|^{2}(\tilde{\varepsilon}+1)+C\|\nabla\tilde{v}_{1}\|^{2}\tilde{\varepsilon}^{2\beta}.
\end{align*}
This, together with \eqref{EAZG001} and \eqref{FAZ001}, gives that for $0<\rho<R$,
\begin{align}\label{TGZWQ01}
\fint_{\partial B_{\rho}'}|\tilde{v}_{2}|^{2}\leq& C\Big(\frac{R}{\rho}\Big)^{d+m-2}R^{2+2\sigma}\Big(\frac{\varepsilon}{R^{m}}+1\Big)\|F\|^{2}_{\varepsilon,\sigma,0,B_{R_{0}}'}\notag\\
&+C\Big(\frac{R}{\rho}\Big)^{d+m-2}\Big(\frac{\varepsilon}{R^{m}}\Big)^{2\beta}R^{2-2\tau}\|\nabla\bar{v}\|^{2}_{\varepsilon,-\tau,1,B_{R_{0}}'}.
\end{align}
%Write
%\begin{align*}
%\omega(\rho):=\left(\fint_{B_{\rho}'\setminus B'_{\rho/2}}\Big|\bar{v}(x')-(\bar{v})^{\kappa}_{ B_{\rho}'\setminus B_{\rho/2}'}\Big|^{2}\right)^{1/2}.
%\end{align*}
Then combining Lemma \ref{lemma001} and \eqref{TGZWQ01}, it follows that for any $0<\rho<(1-\bar{c}_{0})^{2}R$,
\begin{align*}
&\fint_{\partial B_{\rho}'}\kappa\Big(\frac{x'}{|x'|}\Big)|\bar{v}(x')-\bar{v}_{1}(0')|^{2}\notag\\
&\leq2\fint_{\partial B_{\rho}'}\kappa\Big(\frac{x'}{|x'|}\Big)|\bar{v}_{1}(x')-\bar{v}_{1}(0')|^{2}+2\fint_{\partial B_{\rho}'}\kappa\Big(\frac{x'}{|x'|}\Big)|\bar{v}_{2}(x')|^{2}\notag\\
&\leq C\Big(\frac{\rho}{R}\Big)^{2\alpha}\fint_{\partial B_{R}'}\kappa\Big(\frac{x'}{|x'|}\Big)|\bar{v}(x')-(\bar{v})^{\kappa}_{\partial B_{R}'}|^{2}\notag\\
&\quad+C\Big(\frac{R}{\rho}\Big)^{d+m-2}\left[R^{2+2\sigma}\Big(\frac{\varepsilon}{R^{m}}+1\Big)\|F\|^{2}_{\varepsilon,\sigma,0,B_{R_{0}}'}+\Big(\frac{\varepsilon}{R^{m}}\Big)^{2\beta}R^{2-2\tau}\|\nabla\bar{v}\|^{2}_{\varepsilon,-\tau,1,B_{R_{0}}'}\right].
\end{align*}
Multiplying the above by $\rho^{d-2}$ and integrating from $(1-\bar{c_{0}})\rho$ to $(1+\bar{c}_{0})\rho$, $\bar{c}_{0}$ is given in \eqref{MZA001}, we deduce that for any $0<\rho<(1-\bar{c}_{0})r\leq(1-\bar{c}_{0}^{2})R$,
\begin{align*}
&\fint_{B'_{(1\pm\bar{c}_{0})\rho}}\kappa\Big(\frac{x'}{|x'|}\Big)\Big|\bar{v}(x')-(\bar{v})^{\kappa}_{B'_{(1\pm\bar{c}_{0})\rho}}\Big|^{2}\leq\fint_{B'_{(1\pm\bar{c}_{0})\rho}}\kappa\Big(\frac{x'}{|x'|}\Big)|\bar{v}(x')-\bar{v}_{1}(0)|^{2}\notag\\
&\leq C\Big(\frac{\rho}{r}\Big)^{2\alpha}\fint_{\partial B_{r}'}\kappa\Big(\frac{x'}{|x'|}\Big)|\bar{v}(x')-(\bar{v})^{\kappa}_{\partial B_{r}'}|^{2}\notag\\
&\quad+C\Big(\frac{r}{\rho}\Big)^{d+m-2}\left[r^{2+2\sigma}\Big(\frac{\varepsilon}{r^{m}}+1\Big)\|F\|^{2}_{\varepsilon,\sigma,0,B_{r}'}+\Big(\frac{\varepsilon}{r^{m}}\Big)^{2\beta}r^{2-2\tau}\|\nabla\bar{v}\|^{2}_{\varepsilon,-\tau,1,B_{R_{0}}'}\right]\notag\\
&\leq C\Big(\frac{\rho}{r}\Big)^{2\alpha}\fint_{\partial B_{r}'}\kappa\Big(\frac{x'}{|x'|}\Big)|\bar{v}(x')-(\bar{v})^{\kappa}_{B'_{(1\pm\bar{c}_{0})R}}|^{2}\notag\\
&\quad+C\Big(\frac{r}{\rho}\Big)^{d+m-2}\left[r^{2+2\sigma}\Big(\frac{\varepsilon}{r^{m}}+1\Big)\|F\|^{2}_{\varepsilon,\sigma,0,B_{R_{0}}'}+\Big(\frac{\varepsilon}{r^{m}}\Big)^{2\beta}r^{2-2\tau}\|\nabla\bar{v}\|^{2}_{\varepsilon,-\tau,1,B_{R_{0}}'}\right],
\end{align*}
where $B'_{(1\pm\bar{c}_{0})\rho}$ is defined by \eqref{MZA001}. Therefore, multiplying it by $r^{d-2}$ and integrating from $(1-\bar{c}_{0})R$ to $(1+\bar{c}_{0})R$, we complete the proof of Proposition \ref{prop003}.

\end{proof}

%\section{The proofs of Theorem \ref{thm001}}\label{SECT002}
For any given $x_{0}'=(x^{0}_{1},...,x^{0}_{d-1})\in B_{R_{0}/2}'$, denote
\begin{align*}
\delta(x_{0}'):=\varepsilon+\kappa_{0}\Big(\sum\limits_{i\in\mathcal{A}}\kappa_{i}|x^{0}_{i}|^{2}\Big)^{\frac{m}{2}}+\sum\limits_{j\in\mathcal{B}}\kappa_{j}|x^{0}_{j}|^{m}.
\end{align*}
For $s,t>0$ and $x'\in B'_{R_{0}}$, denote by $Q_{s,t}(x')$ the cylinder as follows:
\begin{align*}
Q_{s,t}(x'):=\{y=(y',y_{d})\in\mathbb{R}^{d}\,|\,|y'-x'|<s,\,|y_{d}|<t\}.
\end{align*}
For simplicity, let $Q_{s,t}:=Q_{s,t}(0')$ if $x'=0'$. For the convenience of presentation, in the following the domain notations such as $Q_{s,t}(x')$, $\Omega_{s}(x')$ and $B'_{t}$ are used in the sense that $Q_{s,t}(x')\setminus\{y'=0'\}$, $\Omega_{s}(x')\setminus\{y'=0'\}$ and $B_{t}'\setminus\{0'\}$ if $\varepsilon=0$ and $x'\neq0'$.

Similar to \eqref{INEQ001}--\eqref{DZM001}, a direct computation gives that if $\varepsilon=0$ and $x'_{0}\neq0'$, or if $\varepsilon>0$ and $|x'_{0}|\geq\varepsilon^{1/m}$,
\begin{align}\label{KDBAW001}
\theta_{3}^{-m/(m-1)}|x_{0}'|^{m}\leq\delta_{0}\leq (1+\theta_{1})|x_{0}'|^{m},
\end{align}
where $\theta_{1}$ and $\theta_{3}$ are, respectively, given by \eqref{CONSTANT001} and \eqref{CONSTANT003}. In view of the value of $c_{0}$ given in \eqref{cvalue001}, we know that $c_{0}\leq \frac{1}{4}\big(1+\theta_{1}\big)^{-1/m}$. This, together with \eqref{KDBAW001}, shows that
\begin{align}\label{DZNQ001}
|x'_{0}|-2c_{0}\delta_{0}^{1/m}\geq\frac{1}{2}|x'_{0}|,\;\,\text{if $\varepsilon=0$ and $x'_{0}\neq0'$, or if $\varepsilon>0$ and $|x'_{0}|\geq\varepsilon^{1/m}$}.
\end{align}
Observe that
\begin{align*}
\sum^{N}_{i=1}a_{i}^{p}\leq \bigg(\sum^{N}_{i=1}a_{i}\bigg)^{p},\quad a_{i}\geq0,\,i=1,...,N,\,N\geq1,\;p\geq1,
\end{align*}
which, together with \eqref{INEQ001}, reads that
\begin{align}\label{AWAZ001}
|\nabla_{x'}\delta(x')|^{2}=&m^{2}\kappa_{0}^{2}\sum_{i\in\mathcal{A}}\kappa_{i}^{2}x_{i}^{2}\bigg(\sum_{i\in\mathcal{A}}\kappa_{i}x_{i}^{2}\bigg)^{m-2}+m^{2}\sum_{j\in\mathcal{B}}\kappa_{j}^{2}|x_{j}|^{2(m-1)}\notag\\
\leq&m^{2}\kappa_{0}^{2}\bigg(\sum_{i\in\mathcal{A}}\kappa_{i}^{4}\bigg)^{\frac{1}{2}}\bigg(\sum_{i\in\mathcal{A}}\kappa_{i}^{2}\bigg)^{\frac{m-2}{2}}\bigg(\sum_{i\in\mathcal{A}}x_{i}^{4}\bigg)^{\frac{m-1}{2}}\notag\\
&+m^{2}\bigg(\sum_{j\in\mathcal{B}}\kappa_{j}^{4}\bigg)^{\frac{1}{2}}\bigg(\sum_{j\in\mathcal{B}}|x_{j}|^{4(m-1)}\bigg)^{\frac{1}{2}}\notag\\
\leq&m^{2}\theta_{2}^{2}\Bigg[\bigg(\sum_{i\in\mathcal{A}}x_{i}^{4}\bigg)^{m-1}+\sum_{j\in\mathcal{B}}|x_{j}|^{4(m-1)}\Bigg]^{\frac{1}{2}}\notag\\
\leq&m^{2}\theta_{2}^{2}\Bigg[\bigg(\sum_{i\in\mathcal{A}}x_{i}^{2}\bigg)^{2(m-1)}+\bigg(\sum_{j\in\mathcal{B}}x_{j}^{2}\bigg)^{2(m-1)}\Bigg]^{\frac{1}{2}}\notag\\
\leq&m^{2}\theta_{2}^{2}|x'|^{2(m-1)},
\end{align}
where $\theta_{2}$ is given by \eqref{CONSTANT002}. Recalling the value of $c_{0}$ given by \eqref{cvalue001}, it follows from \eqref{KDBAW001} and \eqref{AWAZ001} that for $0<s\leq2c_{0}\delta^{1/m}_{0}$, $x=(x',x_{d})\in\Omega_{s}(x_{0}')$,
\begin{align*}
|\delta(x')-\delta(x_{0}')|\leq& m\theta_{2}|x_{\theta}'|^{m-1}|x'-x_{0}'|\notag\\
\leq& m2^{m-2}\theta_{2}s(s^{m-1}+|x_{0}'|^{m-1})\notag\\
\leq&m2^{m-1}\theta_{2}\big((2c_{0})^{m-1}+\theta_{3}\big)c_{0}\leq\frac{\delta(x_{0}')}{2},
\end{align*}
where $x_{\theta}'$ is some point between $x'$ and $x_{0}'$. This leads to that
\begin{align}\label{QWN001}
\frac{1}{2}\delta(x_{0}')\leq\delta(x')\leq\frac{3}{2}\delta(x_{0}'),\quad\mathrm{in}\;\Omega_{s}(x_{0}').
\end{align}
From \eqref{QWN001}, we give a precise description for the equivalence of the height for small narrow region $\Omega_{s}(x_{0}')$.

For $\varepsilon\geq0$, let
\begin{align}\label{change001}
\begin{cases}
y'=x',\\
y_{d}=2\delta_{0}\left(\frac{x_{d}-h_{2}(x')+\varepsilon/2}{\varepsilon+h_{1}(x')-h_{2}(x')}-\frac{1}{2}\right).
\end{cases}
\end{align}
Under the change of variables \eqref{change001}, $\Omega_{R_{0}}$ becomes $Q_{R_{0},\delta_{0}}$. It is worth pointing out that by making use of the rescaling argument for every height line segment $\delta_{0}$ of the thin gap $\Omega_{R_{0}/2}$ in \eqref{change001}, we will present the proof for Theorem \ref{thm001} in a different style (by contrast with the proofs of Theorems 1.1 and 1.3 in pages 15--22 of \cite{DLY2022}). Our style of proof requires the exact value of every aforementioned parameter, including $\theta_{i}$, $i=1,2,3$, $c_{0}$ and $\bar{c}_{0}$ defined by \eqref{CONSTANT001}--\eqref{MZA001}. In the following these parameters make us succeed to apply the ``flipping argument" created in \cite{BLY2010} to a small neighbourhood centered at every point of the considered narrow region (specially, if $\varepsilon=0$, pick $x'_{0}\neq0'$, while if $\varepsilon>0$, choose $|x'_{0}|\geq\varepsilon^{1/m}$). Based on the above analysis, our style may be more favorable to deepen the readers' understanding on the idea and schemes developed in \cite{BLY2010,LY202102,DLY2022}.

Set $v(y)=u(x)$. In view of \eqref{problem006}, we obtain that $v$ verifies
\begin{align}\label{ZKM001}
\begin{cases}
-\partial_{i}(b_{ij}(y)\partial_{j}v(y))=0,&\mathrm{in}\;Q_{R_{0},\delta_{0}},\\
b_{dj}(y)\partial_{j}v(y)=0,&\mathrm{on}\;\{y_{d}=\pm\delta_{0}\},
\end{cases}
\end{align}
with $\|v\|_{L^{\infty}(Q_{R_{0},\varepsilon})}\leq1$ and
\begin{gather}
\begin{align*}
(b_{ij}(y))=&\frac{2\delta_{0}(\partial_{x}y)(A_{ij})(\partial_{x}y)^{t}}{\det(\partial_{x}y)}=\frac{2\delta_{0}(\partial_{x}y)(\partial_{x}y)^{t}}{\det(\partial_{x}y)}+\frac{2\delta_{0}(\partial_{x}y)(A_{ij}-\delta_{ij})(\partial_{x}y)^{t}}{\det(\partial_{x}y)}\notag\\
=&
\begin{pmatrix}\delta&0&\cdots&0&b^{1d} \\ 0&\delta&\cdots&0&b^{2d}\\ \vdots&\vdots&\ddots&\vdots&\vdots\\0&0&\cdots&\delta&b^{d-1\,1}\\ b^{d1}&b^{d2}&\cdots&b^{d\,d-1}&\frac{4\varepsilon^{2}+\sum^{d-1}_{i=1}a_{id}^{2}}{\delta}
\end{pmatrix}+\begin{pmatrix} e^{1}&0&\cdots&0 \\ 0&e^{2}&\cdots&0 \\ \vdots&\vdots&\ddots&\vdots\\
0&0&\cdots&e^{d}
\end{pmatrix}\notag\\
&+\begin{pmatrix} c^{11}&c^{12}&\cdots&c^{1d} \\ c^{21}&c^{22}&\cdots&c^{2d} \\ \vdots&\vdots&\ddots&\vdots\\
c^{d1}&c^{d2}&\cdots&c^{dd}
\end{pmatrix},
\end{align*}
\end{gather}
where elements satisfy that for $i=1,...,d-1,$ using conditions ({\bf{H1}}) and ({\bf{H2}}),
\begin{align}\label{K001}
|b^{id}|=|b^{di}|=|-2\delta_{0}\partial_{i}h_{2}(y')-(y_{d}+\delta_{0})\partial_{i}(h_{1}-h_{2})(y')|\leq C\delta_{0}|y'|^{m-1},
\end{align}
and
\begin{align}\label{K002}
|e^{i}|=|O(|y'|^{m+\gamma})|\leq C|y'|^{m+\gamma},\quad |e^{d}|\leq C\delta_{0}^{2}|y'|^{\gamma}\delta^{-1},
\end{align}
and every $c^{ij}$ is the element of the matrix $\frac{2\delta_{0}(\partial_{x}y)(A_{ij}-\delta_{ij})(\partial_{x}y)^{t}}{\det(\partial_{x}y)}$, satisfying that for $i,j=1,...,d-1$,
\begin{align*}
|c^{ij}|\leq C\delta(|y'|^{\gamma}+\delta^{\gamma}),\quad |c^{id}|\leq C\delta_{0}(|y'|^{\gamma}+\delta^{\gamma}).
\end{align*}

In light of \eqref{DM001}, a straightforward computation leads to that
\begin{align}\label{AZ005}
|\nabla_{y'}v|\leq \delta^{-1/m},\quad |\partial_{d} v|\leq C\delta_{0}^{-1}\delta^{1-1/m},\quad\mathrm{in}\;Q_{R_{0},\delta_{0}}.
\end{align}
Denote
\begin{align}\label{KL01}
\bar{v}(y'):=\fint^{\delta_{0}}_{-\delta_{0}}v(y',y_{d})\,dy_{d}.
\end{align}
Thus $\bar{v}$ solves
\begin{align}\label{ZK003}
\mathrm{div}(\delta\nabla\bar{v})=\mathrm{div}F,\quad\mathrm{in}\;B'_{R_{0}},
\end{align}
where $F=(F_{1},...,F_{d-1})$, $F_{i}:=-\overline{b^{id}\partial_{d}v}-e^{i}\partial_{i}\bar{v}-\sum^{d}_{j=1}\overline{c^{ij}\partial_{j}v}$ for $i=1,...,d-1$, $\overline{b^{id}\partial_{d}v}$ and $\overline{c^{ij}\partial_{j}v}$ denotes, respectively, the averages of $b^{id}\partial_{d}v$ and $c^{ij}\partial_{j}v$ with respect to $y_{d}$ on $(-\delta_{0},\delta_{0})$. Utilizing \eqref{DM001} and \eqref{K001}--\eqref{AZ005}, we deduce that for $i=1,...,d-1$,
\begin{align}\label{QL001}
|F_{i}|\leq C\left(|y'|^{\gamma}\delta^{1-1/m}+\delta^{1+\gamma-1/m}\right),\quad\mathrm{in}\;B'_{R_{0}}.
\end{align}

We are now ready to prove Theorem \ref{thm001}.
\begin{proof}[Proof of Theorem \ref{thm001}]
Suppose that $u(0)=0$ and $\|u\|_{L^{\infty}(\Omega_{R_{0}})}=1$ without loss of generality. Let $v$ and $\bar{v}$ be defined by \eqref{ZKM001} and \eqref{KL01}--\eqref{ZK003}, respectively. First, we see from \eqref{AZ005}, \eqref{KL01} and \eqref{QL001} that for $\varepsilon\geq0$,
\begin{align*}
\|\nabla\bar{v}\|_{\varepsilon,-m\sigma_{0},1,B_{R_{0}}'}<\infty,\quad\|F\|_{\varepsilon,\gamma-m\sigma_{0},0,B_{R_{0}}'}<\infty,\quad\mathrm{with}\;\sigma_{0}=\frac{1}{m}.
\end{align*}

{\bf Case 1.} Consider the case when $\varepsilon=0$. Applying Proposition \ref{prop001} with $\sigma=\gamma-m\sigma_{0}$ and $\tau=m\sigma_{0}$, we obtain that for $0<\rho<R\leq R_{0}$,
\begin{align}\label{GQA001}
\left(\fint_{\partial B_{\rho}'}|\bar{v}(x')-\bar{v}(0')|^{2}\right)^{1/2}\leq C\rho^{\tilde{\alpha}},
\end{align}
where $\tilde{\alpha}=\min\{\alpha(\lambda_{1}),1+\gamma-m\sigma_{0}\}.$ In light of \eqref{DZNQ001}, it then follows from \eqref{GQA001} that for $x'_{0}\in B_{R_{0}/2}'\setminus\{0'\}$,
\begin{align}\label{LZMWN001A}
\int_{B'_{2c_{0}\delta^{1/m}_{0}}(x_{0}')}|\bar{v}-\bar{v}(0')|^{2}\leq \int_{B'_{|x_{0}'|+2c_{0}\delta^{1/m}_{0}}(0')}|\bar{v}-\bar{v}(0')|^{2}\leq C\delta^{\frac{2\tilde{\alpha}+d-1}{m}}_{0}.
\end{align}
From \eqref{AZ005}, we know
\begin{align}\label{RMBKL001}
|v(y',y_{d})-\bar{v}(y')|\leq2\delta_{0}\max_{y_{d}\in(-\delta_{0},\delta_{0})}|\partial_{d}v(y',y_{d})|\leq C\delta^{1-1/m},\quad\mathrm{in}\;Q_{R_{0},\delta_{0}}.
\end{align}
A consequence of \eqref{LZMWN001A}--\eqref{RMBKL001} shows that
\begin{align*}
&\fint_{Q_{2c_{0}\delta_{0}^{1/m},\delta_{0}}(x_{0}')}|v-\bar{v}(0')|^{2}dy\notag\\
&\leq\fint_{Q_{2c_{0}\delta_{0}^{1/m},\delta_{0}}(x_{0}')}2\big(|v-\bar{v}|^{2}+|\bar{v}-\bar{v}(0')|^{2})dy\leq C\delta_{0}^{\frac{2\tilde{\alpha}}{m}}.
\end{align*}
Let
\begin{align*}
\tilde{v}(y)=&v(\delta_{0}^{1/m}y'+x_{0}',\delta_{0}^{1/m}y_{d})-\bar{v}(0'),\notag\\
\tilde{b}_{ij}(y)=&\delta_{0}^{-1}b_{ij}(\delta_{0}^{1/m}y'+x_{0}',\delta_{0}^{1/m}y_{d}).
\end{align*}
Observe that $Q_{2c_{0}\delta_{0}^{1/m},\delta_{0}}(x_{0}')\subset Q_{R_{0},\delta_{0}}$ for $x_{0}'\in B_{R_{0}/2}'\setminus\{0'\}$, then $\tilde{v}$ satisfies
\begin{align}\label{EQM001}
\begin{cases}
-\partial_{i}(\tilde{b}_{ij}(y)\partial_{j}\tilde{v}(y))=0,&\mathrm{in}\; Q_{2c_{0},\delta_{0}^{1-1/m}},\\
\tilde{b}_{dj}(y)\partial_{j}\tilde{v}(y)=0,&\mathrm{on}\;\{y_{d}=\pm\delta_{0}^{1-1/m}\}.
\end{cases}
\end{align}
From \eqref{QWN001}, it follows that $\tilde{b}:=(\tilde{b}_{ij})$ verifies
\begin{align}\label{EQM002}
\frac{I}{C}\leq\tilde{b}\leq CI,\quad\mathrm{and}\;\|\tilde{b}\|_{C^{\mu}(Q_{2c_{0},\delta_{0}^{1-1/m}})}\leq C,\quad\mathrm{for}\;\mathrm{any}\;\mu\in(0,1].
\end{align}
For any $l\in\mathbb{N}$, define
\begin{align*}
S_{l}:=\{y\in\mathbb{R}^{d}\,|\,|y'|<2c_{0},\;(2l-1)\delta_{0}^{1-1/m}<y_{d}<(2l+1)\delta_{0}^{1-1/m}\},
\end{align*}
and
\begin{align*}
S:=\{y\in\mathbb{R}^{d}\,|\,|y'|<2c_{0},\,|y_{d}|<2c_{0}\}.
\end{align*}
Especially when $l=0$, $S_{0}=Q_{2c_{0},\delta_{0}^{1-1/m}}$. We first carry out even extension of $\tilde{v}$ in terms of $y_{d}=\delta_{0}^{1-1/m}$ and then perform the periodic extension with the period of $4\delta_{0}^{1-1/m}$. That is, we obtain
\begin{align*}
\hat{v}(y):=\tilde{v}(y',(-1)^{l}(y_{d}-2l\delta_{0}^{1-1/m})),\quad\mathrm{in}\;S_{l},\,l\in\mathbb{Z},
\end{align*}
For $k=1,...,d-1$ and any $l\in\mathbb{Z}$, the corresponding coefficients turns into
\begin{align*}
\hat{a}_{dk}(y)=\hat{a}_{kd}(y):=(-1)^{l}\tilde{a}_{kd}(y',(-1)^{l}(y_{d}-2l\delta_{0}^{1-1/m})),\quad\mathrm{in}\;S_{l},
\end{align*}
while, for other indices,
\begin{align*}
\hat{a}_{ij}(y):=\tilde{a}_{ij}(y',(-1)^{l}(y_{d}-2l\delta_{0}^{1-1/m})),\quad\mathrm{in}\;S_{l}.
\end{align*}
Hence, $\hat{v}$ and $\hat{a}_{ij}$ are defined in $Q_{2,\infty}$. Moreover, $\hat{v}$ solves
\begin{align*}
\partial_{i}(\hat{a}_{ij}\partial_{j}\hat{v})=0,\quad\mathrm{in}\;S,
\end{align*}
by utilizing the conormal boundary conditions. It then follows from Proposition 4.1 of \cite{LN2003} and Lemma 2.1 of \cite{LY202102} that
\begin{align*}
\|\nabla\hat{v}\|_{L^{\infty}(\frac{1}{2}S)}\leq C\|\hat{v}\|_{L^{2}(S)}\leq C\delta_{0}^{\frac{\tilde{\alpha}}{m}}.
\end{align*}
Rescaling back to $u$, we deduce that for $x_{0}=(x_{0}',x_{d})\in\Omega_{R_{0}/2}\setminus\{x_{0}'=0'\}$,
\begin{align}\label{GNRQ0001}
|\nabla u(x_{0})|\leq\|\nabla u\|_{L^{\infty}(\Omega_{c_{0}\delta^{1/m}}(x_{0}'))}\leq C\delta_{0}^{\frac{\tilde{\alpha}-1}{m}}\leq C|x_{0}'|^{\tilde{\alpha}-1},
\end{align}
where $\tilde{\alpha}=\min\{\alpha(\lambda_{1}),1+\gamma-m\sigma_{0}\}$.

In light of \eqref{GNRQ0001}, the upper bound $|\nabla u(x)|\leq C|x'|^{-m\sigma_{0}}$ has been improved to be $|\nabla u(x)|\leq C|x'|^{\tilde{\alpha}-1}$, where $\tilde{\alpha}-1=\min\{\alpha(\lambda_{1})-1,\gamma-m\sigma_{0}\}.$ If $1+\gamma-m\sigma_{0}>\alpha$, the proof is complete. Otherwise, if $1+\gamma-m\sigma_{0}<\alpha$, then choose $\sigma_{1}=\sigma_{0}-\frac{\gamma}{m}$ and repeat the above-mentioned argument. For the purpose of letting $\alpha(\lambda_{1})-1\neq-m\sigma_{0}+k\gamma$ for any $k\geq1$, we may decrease $\gamma$ if necessary. By repeating the argument above with finite times, we prove that \eqref{ma001} holds.

{\bf Case 2.} Consider the case when $\varepsilon>0$. Denote
\begin{align*}
\omega(\rho):=\left(\fint_{B'_{(1\pm\bar{c}_{0})\rho}}\Big|\bar{v}(x')-(\bar{v})^{\kappa}_{B'_{(1\pm\bar{c}_{0})\rho}}\Big|^{2}\right)^{1/2},\quad\text{for }0<\rho<(1-\bar{c}_{0})^{2}R_{0}.
\end{align*}
Picking $\sigma=\gamma-m\sigma_{0}$ and $\tau=m\sigma_{0}$ in Proposition \ref{prop003}, we have
\begin{align*}
\omega(\rho)\leq C\Big(\frac{\rho}{R}\Big)^{\alpha(\lambda_{1})}\omega(R)+C\Big(\frac{R}{\rho}\Big)^{\frac{d+m-2}{2}}R^{1-m\sigma_{0}}\left[R^{\gamma}\left(\frac{\sqrt{\varepsilon}}{R^{m/2}}+1\right)+\left(\frac{\varepsilon}{R^{m}}\right)^{\beta(\lambda_{1})}\right],
\end{align*}
where $\beta(\lambda_{1})>\alpha(\lambda_{1})$ is defined in \eqref{beta001}. In the case of $\varepsilon>0$, we pick a small positive constant $\bar{\mu}$ such that $\bar{\mu}\tilde{\alpha}(\lambda_{1})\leq\gamma$. Then for any $\varepsilon^{\frac{1}{m+\mu}}\leq\rho<(1-\bar{c}_{0})^{2}R\leq (1-\bar{c}_{0})^{2}R_{0}$,
\begin{align*}
\omega(\rho)\leq C\Big(\frac{\rho}{R}\Big)^{\alpha(\lambda_{1})}\omega(R)+C\Big(\frac{R}{\rho}\Big)^{\frac{d+m-2}{2}}R^{1-m\sigma_{0}+\bar{\mu}\beta(\lambda_{1})}.
\end{align*}
Making use of Lemma 5.13 in \cite{GM2012}, we obtain that for $\varepsilon^{\frac{1}{m+\mu}}\leq\rho<(1-\bar{c}_{0})^{2}R_{0}$,
\begin{align*}
\omega(\rho)\leq C\rho^{\hat{\alpha}},\quad\text{with $\hat{\alpha}:=\min\{\alpha(\lambda_{1}),1-m\sigma_{0}+\mu\beta(\lambda_{1})\}$.}
\end{align*}
Then for $x'_{0}\in B'_{(1-\bar{c}_{0})^{2}R_{0}}\setminus B'_{2\varepsilon^{1/(m+\mu)}}$,
\begin{align*}
&\int_{B'_{2c_{0}\delta^{1/m}_{0}}(x_{0}')}\Big|\bar{v}-(\bar{v})^{\kappa}_{B'_{(1\pm\bar{c}_{0})|x_{0}'|}}\Big|^{2}\notag\\
&\leq \int_{B'_{(1\pm\bar{c}_{0})|x_{0}'|}(0')}\Big|\bar{v}-(\bar{v})^{\kappa}_{B'_{(1\pm\bar{c}_{0})|x_{0}'|}}\Big|^{2}\leq C\delta^{\frac{2\hat{\alpha}+d-1}{m}}_{0}.
\end{align*}
This, together with \eqref{RMBKL001}, shows that
\begin{align*}
&\fint_{Q_{2c_{0}\delta_{0}^{1/m},\delta_{0}}(x_{0}')}\Big|v-(\bar{v})^{\kappa}_{B'_{(1\pm\bar{c}_{0})|x_{0}'|}}\Big|^{2}\leq C\delta_{0}^{\frac{2\hat{\alpha}}{m}}.
\end{align*}
Similarly as before, define
\begin{align*}
\tilde{v}(y)=&v(\delta_{0}^{1/m}y'+x_{0}',\delta_{0}^{1/m}y_{d})-(\bar{v})^{\kappa}_{B'_{(1\pm\bar{c}_{0})|x_{0}'|}},\notag\\
\tilde{b}_{ij}(y)=&\delta_{0}^{-1}b_{ij}(\delta_{0}^{1/m}y'+x_{0}',\delta_{0}^{1/m}y_{d}).
\end{align*}
Therefore, $\tilde{v}(y)$ verifies equation \eqref{EQM001} with $\tilde{b}:=(\tilde{b}_{ij})$ satisfying \eqref{EQM002}. Then using the same ``flipping argument" as above, we obtain
\begin{align}\label{GNRQ0001001}
|\nabla u(x_{0})|\leq C\delta_{0}^{\frac{\hat{\alpha}-1}{m}},\quad\mathrm{for}\;x'_{0}\in B'_{(1-\bar{c}_{0})^{2}R_{0}}\setminus B'_{2\varepsilon^{1/(m+\mu)}}.
\end{align}
From \eqref{GNRQ0001001}, we have
\begin{align*}
\mathrm{osc}_{\Omega_{2\rho}\setminus\Omega_{\rho}}u\leq C\rho^{\hat{\alpha}},\quad\text{for any $2\varepsilon^{\frac{1}{m+\mu}}\leq\rho<R_{0}/2$},
\end{align*}
This, together with the maximum principle, reads that for $|x_{0}'|\leq2\varepsilon^{\frac{1}{m+\mu}},$
\begin{align*}
\mathrm{osc}_{\Omega_{2c_{0}\delta_{0}^{1/m}}(x_{0})}u\leq\mathrm{osc}_{\Omega_{4\varepsilon^{\frac{1}{m+\mu}}}}u\leq C\varepsilon^{\frac{\hat{\alpha}}{m+\mu}}.
\end{align*}
Then for any $x_{0}\in\Omega_{2\varepsilon^{\frac{1}{m+\mu}}},$ we have
\begin{align}\label{WNZL001}
\|u-u(x_{0})\|_{L^{\infty}(\Omega_{2c_{0}\delta_{0}^{1/m}}(x_{0}))}\leq C\varepsilon^{\frac{\hat{\alpha}}{m+\mu}}.
\end{align}
Hence, applying the changes of variables in \eqref{change001} for $u-u(x_{0})$, it follows from the same ``flipping argument" and \eqref{WNZL001} that
\begin{align*}
|\nabla u(x_{0})|\leq C\delta_{0}^{-1/m}\varepsilon^{\frac{\hat{\alpha}}{m+\mu}},\quad\text{for $x_{0}\in\Omega_{2\varepsilon^{\frac{1}{m+\mu}}}$}.
\end{align*}
This, in combination with \eqref{GNRQ0001001}, shows that
\begin{align*}
|\nabla u(x)|\leq C\delta^{-\frac{1}{m}+\frac{\hat{\alpha}}{m+\mu}},\quad\text{for any $x\in\Omega_{(1-\bar{c}_{0})^{2}R_{0}}$}.
\end{align*}
Therefore, we improve the aforementioned upper bound $|\nabla u(x)|\leq C(\varepsilon+|x'|^{m})^{-\sigma_{0}}$ to be $|\nabla u(x)|\leq C(\varepsilon+|x'|^{m})^{-\sigma_{0}+\frac{\hat{\alpha}}{m+\mu}}$ with $\hat{\alpha}(\lambda_{1})=\min\{\alpha(\lambda_{1}),1-m\sigma_{0}+\mu\beta(\lambda_{1})\}.$ If $1-m\sigma_{0}+\mu\beta(\lambda_{1})\geq\alpha(\lambda_{1})$, then we have $|\nabla u|\leq C\delta^{-\frac{1}{m}+\frac{\alpha(\lambda_{1})}{m+\mu}}$ in $\Omega_{R_{0}/2}$. By letting $\mu\rightarrow0$, we complete the proof. Otherwise, if $1-m\sigma_{0}+\mu\beta(\lambda_{1})<\alpha(\lambda_{1})$, then set $\sigma_{1}=\sigma_{0}-\frac{1-m\sigma_{0}+\mu\beta(\lambda_{1})}{m+\mu}$ and repeat the above-mentioned argument with
\begin{align*}
\|\nabla\bar{v}\|_{\varepsilon,-m\sigma_{1},1,B_{R_{0}}'}+\|F\|_{\varepsilon,\gamma-m\sigma_{1},0,B_{R_{0}}'}<\infty.
\end{align*}
Denote
\begin{align*}
\sigma_{k}=\sigma_{k-1}-\frac{1-m\sigma_{k-1}+\mu\beta(\lambda_{1})}{m+\mu},\quad k\geq1.
\end{align*}
Then we have
\begin{align*}
\sigma_{k}-\frac{1}{m}=\frac{2m+\mu}{m+\mu}(\sigma_{k-1}-\frac{1}{m})-\frac{\mu\beta(\lambda_{1})}{m+\mu}.
\end{align*}
Note that $\sigma_{0}=\frac{1}{m}$, it then follows that
\begin{align*}
\sigma_{k}=\sigma_{0}-\frac{\sum^{k-1}_{i=0}(\frac{2m+\mu}{m+\mu})^{i}\mu\beta(\lambda_{1})}{m+\mu},
\end{align*}
which yields that
\begin{align*}
1-m\sigma_{k}+\mu\beta(\lambda_{1})=\left(\sum^{k-1}_{i=0}\Big(\frac{2m+\mu}{m+\mu}\Big)^{i}\frac{ m}{m+\mu}+1\right)\mu\beta(\lambda_{1})\rightarrow+\infty,\;\,\text{as $k\rightarrow+\infty$}.
\end{align*}
Then there exists some $k_{0}\in\mathbb{N}$ such that
\begin{align*}
\alpha(\lambda_{1})\leq\left(\sum^{k_{0}-1}_{i=0}\Big(\frac{2m+\mu}{m+\mu}\Big)^{i}\frac{ m}{m+\mu}+1\right)\mu\beta(\lambda_{1})=1-m\sigma_{k_{0}}+\mu\beta(\lambda_{1}),
\end{align*}
and
\begin{align*}
\alpha(\lambda_{1})>\left(\sum^{k_{0}-2}_{i=0}\Big(\frac{2m+\mu}{m+\mu}\Big)^{i}\frac{ m}{m+\mu}+1\right)\mu\beta(\lambda_{1}),\quad\text{if $k_{0}\geq2$.}
\end{align*}
Therefore, after $k_{0}$ iterations, we get
\begin{align*}
|\nabla u(x)|\leq C\delta^{-\frac{1}{m}+\frac{\alpha(\lambda_{1})}{m+\mu}},\quad\text{in $\Omega_{R_{0}/4}$},
\end{align*}
where we used the fact that $(1-\bar{c}_{0})^{2}\geq\frac{1}{4}$. Sending $\mu\rightarrow0$, we obtain that \eqref{U002} holds.

\end{proof}

\section{The proof of Theorem \ref{thm002}}\label{SEC03}

Before giving the proof of Theorem \ref{thm002}, we first demonstrate the validity of the assumed condition ``the eigenspace corresponding to $\lambda_{1}$ contains a function which is odd with respect to some $x_{j_{0}}$, $j_{0}\in\{1,...,d-1\}$."
\begin{definition}
If a function space on $\mathbb{S}^{d-2}\subset\mathbb{R}^{d-1}$ is spanned by functions which are odd with respect to some $x_{i}$ and even in other variables, we say that it satisfies the property $O$.
\end{definition}
For $\mu\in\mathbb{R}$ and $d\geq3$, consider
\begin{align*}
L_{\mu}=-\mathrm{div}_{\mathbb{S}^{d-2}}((1+\mu b(x))\nabla_{\mathbb{S}^{d-2}}),\quad\mathrm{on}\;\mathbb{S}^{d-2},
\end{align*}
where $b(x)\in L^{\infty}(\mathbb{S}^{d-2}).$ Denote by $\lambda_{1,\mu}$ and $V_{1,\mu}$ the corresponding first nonzero eigenvalue and the eigenspace of the following problem
\begin{align}\label{KWZN001}
L_{\mu}u=\lambda(1+\mu b(x))u.
\end{align}
Recall Proposition 5.6 in \cite{DLY2022} as follows:
\begin{lemma}[Proposition 5.6 of \cite{DLY2022}]\label{lemmaAZ001}
Consider the eigenvalue problem \eqref{KWZN001}. Let $b(x)$ be even in all variables and $V_{1,\mu_{0}}$ satisfy the property $O$ for some $\mu_{0}\in\mathbb{R}$. Then there exists a small positive constant $\varepsilon_{0}:=\varepsilon_{0}(d,\|b\|_{L^{\infty}},\mu_{0})$ such that $V_{1,\mu}$ verifies the property $O$ for every $\mu\in(\mu_{0}-\varepsilon_{0},\mu_{0}+\varepsilon_{0}).$
\end{lemma}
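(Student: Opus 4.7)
The plan is to combine the $\mathbb{Z}_2^{d-1}$-symmetry coming from the evenness of $b$ with standard analytic perturbation of eigenvalues for a self-adjoint family. First I would decompose $L^2(\mathbb{S}^{d-2})$ according to parities in the $d-1$ ambient coordinates: for each sign vector $\eta=(\eta_1,\ldots,\eta_{d-1})\in\{\pm 1\}^{d-1}$, set
\[
\mathcal{H}_\eta:=\bigl\{u\in L^2(\mathbb{S}^{d-2}):\,u(\ldots,-x_i,\ldots)=\eta_i\,u(\ldots,x_i,\ldots),\ i=1,\ldots,d-1\bigr\}.
\]
Because $b$ is even in every variable, each reflection $x_i\mapsto -x_i$ commutes with $L_\mu$ and with multiplication by $1+\mu b$, so every sector $\mathcal{H}_\eta$ is invariant under the weighted eigenvalue problem \eqref{KWZN001}, and $L^2(\mathbb{S}^{d-2})=\bigoplus_\eta \mathcal{H}_\eta$ diagonalises the whole family.

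Next I would introduce the first strictly positive eigenvalue $\lambda^\eta_\mu$ of \eqref{KWZN001} restricted to $\mathcal{H}_\eta$, with eigenspace $V^\eta_\mu\subset\mathcal{H}_\eta$. Then
\[
\lambda_{1,\mu}=\min_\eta \lambda^\eta_\mu,\qquad V_{1,\mu}=\bigoplus_{\eta:\,\lambda^\eta_\mu=\lambda_{1,\mu}}V^\eta_\mu.
\]
A sector $\mathcal{H}_\eta$ consists exactly of functions that are odd in the variables $x_i$ with $\eta_i=-1$ and even in the rest, so a basis element of the form demanded by property $O$ lives in $\mathcal{H}_\eta$ iff $\#\{i:\eta_i=-1\}=1$. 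Thus the hypothesis that $V_{1,\mu_0}$ satisfies property $O$ is equivalent to the strict separation
\[
\lambda_{1,\mu_0}+2\tau\le\lambda^\eta_{\mu_0}\qquad\text{for every }\eta\text{ with }\#\{i:\eta_i=-1\}\ne 1,
\]
for some $\tau=\tau(\mu_0)>0$.

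Then I would run a uniform Rayleigh-quotient comparison to transport this gap to nearby $\mu$. Writing
\[
\lambda^\eta_\mu=\inf\frac{\int_{\mathbb{S}^{d-2}}(1+\mu b)|\nabla_{\mathbb{S}^{d-2}}u|^2}{\int_{\mathbb{S}^{d-2}}(1+\mu b)|u|^2}
\]
(via Courant--Fischer min--max, to handle the one-dimensional kernel of constants in the sector $\eta=(+1,\ldots,+1)$), the elementary weight bound $|(1+\mu b)/(1+\mu_0 b)-1|\le C_0|\mu-\mu_0|$ with $C_0=C_0(\|b\|_{L^\infty},\mu_0)$ yields $|\lambda^\eta_\mu-\lambda^\eta_{\mu_0}|\le C|\mu-\mu_0|\lambda^\eta_{\mu_0}$ simultaneously for all $2^{d-1}$ sectors. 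Choosing $\varepsilon_0=\varepsilon_0(d,\|b\|_{L^\infty},\mu_0)$ small enough that the total error is less than $\tau$ preserves both the identity of the minimising sectors and the strict gap to the remaining ones. Consequently $V_{1,\mu}$ is the direct sum of the $V^\eta_\mu$ over precisely the same set of $\eta$'s, namely those with a single $-1$, and property $O$ persists on $(\mu_0-\varepsilon_0,\mu_0+\varepsilon_0)$.

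The main obstacle I anticipate is the uniformity of this Rayleigh-quotient comparison across all sectors; in particular, the all-plus sector has a one-dimensional kernel of constants whose $(1+\mu b)$-weighted orthogonal complement itself depends on $\mu$. The cleanest workaround is to run the min--max on the $\mu$-independent quotient $\mathcal{H}_{(+1,\ldots,+1)}/\mathbb{R}$ and to observe that constants automatically fall outside every other sector, so no such adjustment is needed there. Once this technicality is dispatched, controlling the finitely many sector-gaps simultaneously is standard analytic perturbation theory.
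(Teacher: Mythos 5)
The paper does not prove this lemma at all; it is quoted verbatim as Proposition 5.6 of \cite{DLY2022} and used as a black box, so there is no in-paper argument to compare with. Evaluating your proposal on its own terms: the parity-sector decomposition $L^{2}(\mathbb{S}^{d-2})=\bigoplus_{\eta}\mathcal{H}_{\eta}$, the observation that evenness of $b$ makes each $\mathcal{H}_{\eta}$ invariant, the reformulation of property $O$ as a spectral gap separating the single-minus sectors from all others, and the Courant--Fischer Rayleigh-quotient comparison under the uniform weight bound are the natural and correct ingredients, and the argument does go through.

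Two points deserve tightening. First, the gap $\tau$ you extract from property $O$ at $\mu_{0}$ is not controlled by $d$, $\|b\|_{L^{\infty}}$, $\mu_{0}$ alone --- it genuinely depends on the particular $b$ --- so your mechanism produces $\varepsilon_{0}$ depending on $\tau(b)$ in addition to the listed quantities. The dependence claimed in the lemma is, however, literally correct in the one instance this paper actually uses it, namely $\mu_{0}=0$: there the unweighted Laplacian on $\mathbb{S}^{d-2}$ gives $\lambda_{1,0}=d-2$ with eigenspace $\operatorname{span}\{x_{1},\ldots,x_{d-1}\}$ and the gap to the next cluster (degree-two harmonics, which populate the double-minus and all-plus sectors) equals $d$, so $\tau=\tau(d)$ and one obtains $\varepsilon_{0}=\varepsilon_{0}(d,\|b\|_{L^{\infty}})$ exactly as needed. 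Second, your closing sentence asserts the minimizing sectors are ``precisely the same'' after perturbation; the estimate you prove only excludes the minimizer from migrating into non--single-minus sectors, while the active single-minus sectors may reshuffle among themselves. This is harmless for the conclusion --- property $O$ only requires the minimizers to stay within the single-minus family --- but the phrasing should be weakened accordingly.
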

Then applying Lemma \ref{lemmaAZ001} with $d\geq3$, $\|b\|_{L^{\infty}}\leq2$ and $\mu_{0}=0$, we derive a small constant $\varepsilon_{0}=\varepsilon_{0}(d)$ such that $V_{1,\mu}$ verifies the property $O$ in $(-\varepsilon_{0},\varepsilon_{0})$. Note that by a rotation of the coordinates if necessary, we have $\kappa_{i}\geq\kappa_{i+1}$ for $i=1,...,d-2.$ Choose $\mu=\frac{\varepsilon_{0}}{2}$ and
\begin{align}\label{DZNC001}
b(x)=\frac{2\Big[\sum^{d-2}_{i=1}\kappa_{i}|x_{i}|^{m}-\Big(1-\big(1-\sum^{d-2}_{i=1}x_{i}^{2}\big)^{m/2}\Big)\kappa_{d-1}\Big]}{\varepsilon_{0}\kappa_{d-1}},
\end{align}
which implies that $1+\mu b=\frac{1}{\kappa_{d-1}}\sum^{d-1}_{i=1}\kappa_{i}|x_{i}|^{m}$. Therefore, if we assume that $\frac{\kappa_{1}}{\kappa_{d-1}}-1\leq\varepsilon_{0}$, then $b(x)$ defined in \eqref{DZNC001} satisfies $\|b\|_{L^{\infty}}\leq2$. In fact, since
\begin{align*}
1-\left(1-\sum^{d-2}_{i=1}x_{i}^{2}\right)^{m/2}\geq&\left(\sum^{d-2}_{i=1}x_{i}^{2}\right)^{m/2}\geq\sum^{d-2}_{i=1}|x_{i}|^{m},
\end{align*}
then
\begin{align*}
|b|\leq \frac{2\sum^{d-2}_{i=1}(\kappa_{i}-\kappa_{d-1})|x_{i}|^{m}}{\varepsilon_{0}\kappa_{d-1}}\leq\frac{2}{\varepsilon_{0}}\left(\frac{\kappa_{1}}{\kappa_{d-1}}-1\right)\leq2.
\end{align*}
Consequently, under the condition of $\frac{\kappa_{1}}{\kappa_{d-1}}-1\leq\varepsilon_{0}$, we obtain that the eigenspace corresponding to $\lambda_{1}$ of the eigenvalue problem \eqref{eigen001} verifies the property $O$.

We now give the proof of Theorem \ref{thm002}.
\begin{proof}[Proof of Theorem \ref{thm002}]
For simplicity, write $\alpha:=\alpha(\lambda_{1})$ in the following. Assume without loss of generality that $R_{0}=1$. Denote
\begin{align*}
\bar{u}(x'):=\fint^{h_{1}(x')}_{h_{2}(x')}u(x',x_{d})\,dx_{d},\quad\mathrm{in}\;\Omega_{1}\setminus\{x'=0'\}.
\end{align*}
Then $\bar{u}$ solves
\begin{align*}
\mathrm{div}\Big[\Big(\sum^{d-1}_{i=1}\kappa_{i}|x_{i}|^{m}\Big)\nabla\bar{u}\Big]=\mathrm{div}F,\quad\mathrm{in}\; B_{1}',
\end{align*}
where $F=(F_{1},...,F_{d-1})$, $F_{i}=-\overline{b^{i}\partial_{d}u}-e\partial_{i}\bar{u}$ with $e=O(|x'|^{m+\gamma})$ and
\begin{align*}
b^{i}(x)=(h_{1}(x')-h_{2}(x'))\partial_{i}h_{2}(x')+(x_{d}-h_{2}(x'))\partial_{i}(h_{1}(x')-h_{2}(x')).
\end{align*}
Then we have from $\mathrm{(}${\bf{H1}}$\mathrm{)}$--$\mathrm{(}${\bf{H2}}$\mathrm{)}$ and Theorem \ref{thm001} that
\begin{align}\label{MWALZ001}
|F(x')|\leq C|x'|^{m-1+\gamma+\alpha},\quad\mathrm{in}\;B'_{1}\setminus\{0'\}.
\end{align}

{\bf Step 1.} Denote by $Y_{k,l}$ the normalized eigenfunction corresponding to $\lambda_{k}$ which is $(k+1)$-th eigenvalue of problem \eqref{eigen001}. Then $\{Y_{k,l}\}$ is an orthonormal basis of $L^{2}(\mathbb{S}^{d-2})$ with the inner product \eqref{inner001}. From the assumed condition, we denote by $Y_{1,j_{0}}$ the eigenfunction which is odd with respect to $x_{j_{0}}$. Then the eigenfunction $Y_{1,j_{0}}$ corresponding to $\lambda_{1}$ on the half sphere $\mathbb{S}^{d-2}\cap\{x_{j_{0}}>0\}$ satisfies the zero Dirichlet boundary condition. $\lambda_{1}$, as the first nonzero eigenvalue of problem \eqref{eigen001} on the sphere, is also the same on the half sphere. Then $\lambda_{1}$ is simple and $Y_{1,j_{0}}$ keeps the same sign on the half sphere. So we assume without loss of generality that $Y_{1,j_{0}}>0$ in $\{x_{j_{0}}>0\}$ and  $Y_{1,j_{0}}<0$ in $\{x_{j_{0}}<0\}$. Based on the fact that the domain $\Omega:=D\setminus\overline{D_{1}\cup D_{2}}$ is symmetric with respect to each $x_{i}$, $1\leq i\leq d-1$, and $\varphi$ is an odd function of $x_{j_{0}}$, we deduce from the classical elliptic theory that $u$ is an odd function of $x_{j_{0}}$ and thus $\bar{u}$ is also odd in $x_{j_{0}}$, and $\bar{u}(0')=0$. Then we utilize the basis $\{Y_{k,l}\}$ to carry out the decomposition as follows:
\begin{align}\label{WHDZ001}
\bar{u}(x')=\sum^{\infty}_{k=1}\sum^{N(k)}_{l=1}U_{k,l}(r)Y_{k,j}(\xi),\quad\mathrm{in}\;B_{R_{0}}'\setminus\{0'\},
\end{align}
where $U_{k,l}\in C([0,1))\cap C^{\infty}((0,1))$ is given by $U_{k,l}(r)=\fint_{\mathbb{S}^{d-2}}\kappa(\xi)\bar{u}(r,\xi)Y_{k,l}(\xi)d\xi$. Based on these facts above, we deduce that $U_{1,j_{0}}(0)=0$,
\begin{align*}
LU_{1,j_{0}}:=U_{1,j_{0}}''+\frac{m+d-2}{r}U_{1,j_{0}}'-\frac{\lambda_{1}}{r^{2}}U_{1,j_{0}}=H(r),\quad0<r<1,
\end{align*}
where
\begin{align*}
H(r)&=\int_{\mathbb{S}^{d-2}}\frac{(\mathrm{div}F)Y_{1,j_{0}}(\xi)}{\kappa(\xi)r^{m}}d\xi=\int_{\mathbb{S}^{d-2}}\frac{\partial_{r}F_{r}+\frac{1}{r}\nabla_{\xi}F_{\xi}}{\kappa(\xi)r^{m}}Y_{1,j_{0}}(\xi)d\xi\notag\\
&=\partial_{r}\left(\int_{\mathbb{S}^{d-2}}\frac{F_{r}Y_{1,j_{0}}}{\kappa(\xi)r^{m}}d\xi\right)+\int_{\mathbb{S}^{d-2}}\left(\frac{mF_{r}Y_{1,j_{0}}}{\kappa(\xi)r^{m+1}}-\frac{F_{\xi}}{r^{m+1}}\nabla_{\xi}\Big(\frac{Y_{1,j_{0}}}{\kappa(\xi)}\Big)\right)d\xi\notag\\
&=:\partial_{r}A(r)+B(r),\quad\mathrm{in}\;(0,1),
\end{align*}
with $A(r),B(r)\in C^{1}([0,1))$. Making use of \eqref{MWALZ001}, we obtain
\begin{align}\label{AB001}
|A(r)|\leq C(d)r^{\gamma+\alpha-1},\quad\mathrm{and}\;|B(r)|\leq C(d)r^{\gamma+\alpha-2},\quad\mathrm{in}\;(0,1).
\end{align}

\noindent{\bf Step 2.} We first demonstrate that $U_{1,j_{0}}$ can be decomposed as follows:
\begin{align}\label{TMZW001}
U_{1,j_{0}}(r)=C_{1}r^{\alpha}+O(r^{\alpha+\gamma}),
\end{align}
where $\alpha=\alpha(\lambda_{1})$. Observe that $g=r^{\alpha}$ satisfies $Lg=0$. Define
\begin{align*}
w(r):=\int_{0}^{r}\frac{1}{s^{d+m+2\alpha-2}}\int^{s}_{0}t^{d+m+\alpha-2}H(t)dtds,\quad\mathrm{in}\;(0,1).
\end{align*}
Denote $v=gw$. Then $v$ verifies
\begin{align*}
Lv=\frac{g}{G}(Gw')'=H,\quad\mathrm{with}\; G=g^{2}r^{d+m-2}.
\end{align*}
From \eqref{AB001}, we have $|w(r)|\leq Cr^{\gamma}$ and thus $|v(r)|\leq Cr^{\alpha+\gamma}$. Due to the fact that $U_{1,j_{0}}-v$ stays bounded and verifies $L(U_{1,j_{0}}-v)=0$ in $(0,1)$, we obtain that $U_{1,j_{0}}=C_{1}g+v$ and thus \eqref{TMZW001} holds.

Claim that the constant $C_{1}>0$. By the symmetry and convexity of the domain, we obtain that $\partial_{\nu}x_{j_{0}}\geq0$ in $\{x_{j_{0}}\geq0\}$ and $\partial_{\nu}x_{j_{0}}\leq0$ in $\{x_{j}\leq0\}$. Then combining the assumed conditions in Theorem \ref{thm002}, $x_{j_{0}}$ becomes a subsolution of \eqref{con002} in $\{x_{j_{0}}\geq0\}$, while it is a supersolution of \eqref{con002} in $\{x_{j_{0}}\leq0\}$. This implies that $|u(x)|\geq|x_{j_{0}}|$ in $\Omega$ and then $|\bar{u}(x)|\geq|x_{j_{0}}|$ in $B_{1}'$. In light of the fact that $Y_{1,j_{0}}$ and $x_{j_{0}}$ possess the same sign, we obtain
\begin{align*}
U_{1,j_{0}}=\fint_{\mathbb{S}^{d-2}}\kappa(\xi)\bar{u}(r,\xi)Y_{1,j_{0}}(\xi)d\xi\geq Cr,\quad\text{for some constant }C>0,
\end{align*}
which, together with \eqref{TMZW001} and the assumed condition that $\gamma>1-\alpha$, reads that $C_{1}>0$.

From \eqref{WHDZ001} and \eqref{TMZW001}, we deduce
\begin{align*}
\left(\int_{\mathbb{S}^{d-2}}\kappa(\xi)|\bar{u}(r,\xi)|^{2}d\xi\right)^{1/2}\geq|U_{1,j_{0}}|\geq\frac{C_{1}}{2}r^{\alpha},\quad\mathrm{in}\;(0,r_{0}),
\end{align*}
for some positive constant $r_{0}$. Then there exists a $\xi_{0}(r)\in\mathbb{S}^{d-2}$ for any $r\in(0,r_{0})$ such that
\begin{align}\label{MAZE001}
|\bar{u}(r,\xi_{0}(r))|\geq\frac{1}{C_{2}}r^{\alpha},\quad\text{for some constant $C_{2}>0$.}
\end{align}
Using \eqref{ma001}, we have
\begin{align}\label{MAZE002}
|u(r,\xi_{0}(r))-\bar{u}(r,\xi_{0}(r))|\leq Cr^{m}\sup\limits_{h_{2}(x')\leq x_{d}\leq h_{1}(x')}|\partial_{d}u(r,\xi_{0}(r),x_{d})|\leq Cr^{m+\alpha-1}.
\end{align}
From \eqref{MAZE001} and \eqref{MAZE002}, we deduce
\begin{align*}
|u(r,\xi_{0}(r),0)|\geq\frac{1}{2C_{2}}r^{\alpha},\quad\mathrm{in}\;(0,r_{1}),
\end{align*}
for some positive constant $r_{1}$. Write $x_{0}=(r,\xi_{0}(r),0)$. Utilize \eqref{ma001} and pick a sufficiently large $x_{0}$-independent constant $C_{3}$ such that
\begin{align*}
|u(x_{0}C_{3}^{-1})|\leq C|x_{0}'|^{\alpha}C_{3}^{-\alpha}\leq\frac{1}{4C_{2}}|x_{0}'|^{\alpha}.
\end{align*}
Then there exists a point $x$ in the line segment between $x_{0}$ and $x_{0}/C_{3}$ such that
\begin{align*}
|\nabla u(x)|\geq\frac{1}{C}|x'|^{\alpha-1},
\end{align*}
where $C$ is some positive constant, which may depend on $d,m,\kappa_{i}$, $i=1,...,d-1$, and the upper bounds of $\|\partial D_{j}\|_{C^{4}}$, $j=1,2.$ The proof is complete.

\end{proof}

\noindent{\bf{\large Acknowledgements.}}
The author would like to thank Prof. C.X. Miao for his constant encouragement and useful discussions. The author was partially supported by CPSF (2021M700358).

\bibliographystyle{plain}

\def\cprime{$'$}

\end{document}